\documentclass{article}
\usepackage{tikz-cd} 
\usepackage{tikz}
\usepackage{relsize}
\usepackage{cite}
\usepackage{color}
\usepackage{amsmath,amsthm,amssymb}
\usepackage{enumitem}
\usepackage{algorithm}
\usepackage{algpseudocode}
\usepackage[margin=2.6cm]{geometry}
\usepackage{mathdots}

\makeatletter
\usepackage{comment}
\let\wfs@comment@comment\comment
\let\comment\@undefined

\usepackage{changes}
\let\wfs@changes@comment\comment
\let\comment\@undefined

\newcommand\comment{%
    \ifthenelse{\equal{\@currenvir}{comment}}
    {\wfs@comment@comment}
    {\wfs@changes@comment}%
}
\makeatletter
\def\namedlabel#1#2{\begingroup
    #2%
    \def\@currentlabel{#2}%
    \phantomsection\label{#1}\endgroup
}
\makeatother

\usepackage{scalerel,stackengine}
\stackMath
\usepackage{amsmath,amssymb,amsthm,mathrsfs,amsfonts,dsfont,stmaryrd} 
\usepackage{multirow}
\usepackage{amscd}   
\usepackage{fullpage}
\usepackage[all]{xy}  
\usepackage{mathrsfs,enumitem}
\usepackage[titletoc,toc,title]{appendix}
\usepackage{xcolor}  
\usepackage[normalem]{ulem}
\usepackage{diagbox}
\usepackage{caption}
\usepackage[colorlinks = true,
            linkcolor = black,
            urlcolor  = blue,
            citecolor = red,
            anchorcolor = blue]{hyperref}
\usepackage{graphicx}
 \usepackage{color}
 \usepackage{tikz, framed}
\usetikzlibrary{arrows,shapes,matrix,decorations.pathmorphing}
\usetikzlibrary{backgrounds}
\usepackage[english]{babel} 

\theoremstyle{definition}
\newtheorem{theorem}{Theorem}[section]
\newtheorem{definition}[theorem]{{{Definition}}}
\newtheorem{example}[theorem]{{{Example}}}
\newtheorem{notation}[theorem]{{{Notation}}}

\newtheorem{conjecture}[theorem]{{{Conjecture}}}
\newtheorem{corollary}[theorem]{{{Corollary}}}

\newtheorem{lemma}[theorem]{{{Lemma}}}

\newcommand{\sq}{\mathbin{\square}}

\title{A cyclic flat embedding theorem for transversal $q$-matroids}
\author{Andrew Fulcher}
\date{}

\begin{document}

\maketitle

\begin{abstract}
Cyclic flats form a common structural invariant of both matroids and $q$-matroids, determining these objects through their weighted lattices of cyclic flats. In this paper we exploit this perspective to establish a correspondence between matroids and a subclass of $q$-matroids that we call coordinate $q$-matroids. 

Our main result is a cyclic flat embedding theorem showing that the cyclic flat structure of a transversal matroid is preserved under this correspondence. This provides a mechanism for transferring structural properties from matroid theory to the $q$-matroid setting. As an application, we show that nested $q$-matroids are transversal and therefore representable.

Finally, we illustrate the usefulness of this perspective by analysing transversal $q$-matroids under binary operations. We prove that the class of transversal $q$-matroids is closed under the free product and propose a natural presentation for the direct sum motivated by the corresponding construction for matroids.
\end{abstract}

\section{Introduction}

Matroid theory has a rich and well-developed history, dating back to Whitney's foundational work in 1935 \cite{Whitney1935}. The theory provides an axiomatic framework that captures a wide range of combinatorial structures \cite{Welsh1976}. One central example is the association of matroids to error-correcting codes. Matroids arising from linear codes are called representable matroids and have been the subject of extensive study. The theory of representable matroids has seen significant development, as well as applications in coding theory \cite{BritzShiromoto2016,westerback_on_comb_LRCs}.

A prevailing theme in combinatorics is that of $q$-analogues, where combinatorics over a set is extended to that over a vector space. The theory of $q$-matroids first appeared in Crapo's PhD thesis, but was formalised only much later in \cite{JurriusPellikaan2018}, where several concepts from matroid theory were extended to the vector space setting. Since then, many further notions from matroid theory have been shown to extend, often in non-trivial ways, to the $q$-analogue \cite{ByrneCeriaJurrius2022}. However, other concepts remain elusive or have been shown not to extend at all.

One particularly challenging notion is that of representability. Compared to the classical setting, the theory of representable $q$-matroids poses additional difficulties and remains comparatively undeveloped. At the same time, there is strong motivation for a deeper understanding of representable $q$-matroids, since this class corresponds precisely to those $q$-matroids arising from vector rank-metric codes.

A foundational result in matroid theory is that the class of representable matroids is closed under the binary operations of direct sum and free product \cite{CrapoSchmitt2005,Oxley2011}. Analogous operations have been developed for $q$-matroids \cite{CeriaJurrius2024DirectSum, AlfaranoByrneFulcher2025}. However, the behaviour of representability under these operations is markedly different in the $q$-analogue. In particular, it has been shown that the direct sum of representable $q$-matroids need not be representable \cite{GluesingLuerssenJany2025}. Moreover, it remains open whether the free product of representable $q$-matroids is representable.

Understanding when representability of $q$-matroids is preserved under binary operations is therefore an important problem. Such a characterisation would provide structural tools for constructing new vector rank-metric codes. Some progress in this direction has recently been made in \cite{alfarano2026representability}, which also highlights connections between $q$-matroid theory and finite geometry.

One of the rare instances in which a concept from matroid theory extends to $q$-matroid theory in a particularly direct way is that of \emph{cyclic flats}. This has been explored in \cite{AlfaranoByrne2024CyclicFlats,AlfaranoByrneFulcher2025,GluesingLuerssenJany2024}, where the theory of cyclic flats for $q$-matroids was developed and their behaviour under binary operations was studied.

In this paper we formalise a correspondence between matroids and a subclass of $q$-matroids that we call \emph{coordinate $q$-matroids}, via their weighted lattices of cyclic flats. Through this correspondence we prove a cyclic flat embedding theorem showing that transversal matroids embed into transversal $q$-matroids. This embedding is the central contribution of the paper and provides a mechanism for transferring structural results from transversal matroid theory to the $q$-matroid setting. As an immediate consequence, we obtain the representability of a large class of $q$-matroids not previously known to be representable, including the fundamental class of \emph{nested} $q$-matroids.

This cyclic flat embedding theorem suggests a canonical method of embedding matroids into $q$-matroids in such a way that certain structural properties are preserved. In particular, it raises the question of whether cyclic flat embedding provides a canonical means of embedding representable matroids into representable $q$-matroids. This is only one of several natural questions that arise from this perspective.

We also investigate the behaviour of transversal $q$-matroids under binary operations. We provide a constructive proof that the class of transversal $q$-matroids is closed under the free product. Moreover, using the cyclic flat embedding theorem, we obtain the analogous result for the class of coordinate transversal $q$-matroids. In doing so, we propose a presentation for the direct sum of transversal $q$-matroids motivated by the corresponding presentation in the matroid setting.

More broadly, the cyclic flat embedding perspective suggests a guiding principle for extending constructions from matroid theory to the $q$-analogue. If a matroid operation or property admits a meaningful $q$-analogue, then one may expect that the cyclic flat embedding of a matroid possessing this property should yield a $q$-matroid exhibiting the corresponding $q$-analogue. In this sense, cyclic flat embedding provides a natural testing ground for proposed extensions of matroid-theoretic constructions to the $q$-matroid setting.

The structure of the paper is as follows. Section~2 establishes notation and definitions, and recalls several results that will be required later in the paper. Section~3 contains the main result of this paper. In this section, we formalise a correspondence between matroids and coordinate $q$-matroids and show that the transversality (and thus representability) of a coordinate $q$-matroid can be decided by the transversality of its corresponding matroid. As a corollary, we show that all nested $q$-matroids are transversal and representable. 
Section~4 establishes the closure of the class of transversal $q$-matroids under free products. We also show that closure of the class of coordinate transversal $q$-matroids under direct sums follows from the cyclic flat embedding theorem of the previous section. This yields a further class of representable $q$-matroids.

\section{Preliminaries}

We assume that the reader has some familiarity with matroid theory \cite{Welsh1976} and $q$-matroid theory \cite{JurriusPellikaan2018}, for which we have provided references. $q$-matroids can be defined in several cryptomorphic ways. In this paper, we focus on definitions via the \emph{rank function}, \emph{independent spaces}, and \emph{cyclic flats}. Our primary viewpoint will be through the rank function.

\begin{notation}
    We denote by $M=(E,\rho)$ a $q$-matroid on the ambient space $E$ with rank function $\rho$. We denote the set of independent spaces (those $X\leq E$ such that $\rho(X)=\dim(X)$) of $M$ by $\mathcal{I}(M)$.
\end{notation}

The theory of cyclic flats for $q$-matroids was formalised in \cite{AlfaranoByrne2024CyclicFlats}. We recall the definition here and establish notation.

\begin{definition}
    For a $q$-matroid $M=(E,\rho)$, we say that $X\leq E$ is a \textbf{flat} if $\rho(X)<\rho(X\oplus\langle v\rangle)$ for all $v\in E\setminus X$. We say that $X$ is \textbf{cyclic} if $\rho(H)=\rho(X)$ for all $H\leq X$ such that $\dim(H)=\dim(X)-1$.
\end{definition}

\begin{notation}
    For a $q$-matroid $M$, we denote its set of flats by $\mathcal{F}(M)$ and its set of cyclic spaces by $\mathcal{C}(M)$. The set of cyclic flats is $\mathcal{F}(M)\cap\mathcal{C}(M)$, which we denote by $\mathcal{Z}(M)$. We may use the same established notation $\mathcal{I}$, $\mathcal{F}$, $\mathcal{C}$, and $\mathcal{Z}$ analogously for matroids.
\end{notation}

It was established in \cite{JurriusPellikaan2018,AlfaranoByrne2024CyclicFlats} that a $q$-matroid is uniquely determined not only by its rank function, but alternatively by its independent spaces, flats, cyclic spaces, or cyclic flats together with the rank function restricted to the cyclic flats. Therefore, a $q$-matroid may equivalently be defined using any one of these notions.

The following standard definitions from $q$-matroid theory will be convenient for our purposes.

\begin{definition}
    For a $q$-matroid $M=(E,\rho)$ and $X\leq E$, an inclusion-maximal element of $\mathcal{I}(M)$ that is contained in $X$ is called a \textbf{basis of $X$ in $M$}. An inclusion-minimal element of $\mathcal{C}(M)$ is called a \textbf{circuit of $M$}.
\end{definition}

It is well established that for any $X\leq E$, all bases of $X$ in $M$ are equidimensional. It was shown in \cite{JurriusPellikaan2018} that any $q$-matroid is uniquely determined by its set of independent spaces. It was shown in \cite{AlfaranoByrne2024CyclicFlats} that any $q$-matroid is uniquely determined by its set of cyclic flats together with the respective ranks of those cyclic flats.

One of the fundamental features of matroids and $q$-matroids is the presence of natural binary operations. In the $q$-analogue, such operations take place over vector spaces rather than sets, and we therefore introduce the following notation. Throughout, all vector spaces are assumed to be finite-dimensional over a finite field.

\begin{notation}
Let $E_1,\dots,E_m$ be vector spaces of dimensions $n_1,\dots,n_m$, respectively. We write
\[
E_1 \oplus \cdots \oplus E_m
\]
for the $(n_1+\cdots+n_m)$-dimensional vector space obtained by embedding $E_i$ in the coordinate block corresponding to the $i$-th summand. We identify each $E_i$ with its canonical image in this direct sum. When no confusion can arise, the zero subspace of any vector space is denoted simply by $0$.

For each $i \in \{1,\dots,m\}$ and each subspace $X \le E_1 \oplus \cdots \oplus E_m$, we denote by $\pi_i(X)$ the projection of $X$ onto $E_i$. If $X$ can be written in the form
\[
X=\bigoplus_{i=1}^m
(0\oplus\cdots\oplus0\oplus X_i\oplus0\oplus\cdots\oplus0),
\qquad X_i \le E_i,
\]
we simply write $X=\bigoplus_{i=1}^m X_i$.
\end{notation}

We illustrate this notation with a simple example.

\begin{example}
Let $E_1=E_2=E_3=\mathbb{F}_2^2$, each with canonical basis vectors $e_1,e_2$. The space
$E_1\oplus E_2\oplus E_3$
is naturally isomorphic to $\mathbb{F}_2^6$, whose canonical basis vectors we denote by $\bar e_1,\dots,\bar e_6$. With the above notation,
\[
0\oplus E_2\oplus E_3
=\langle \bar e_3,\bar e_4,\bar e_5,\bar e_6\rangle
=0\oplus\langle e_1,e_2\rangle\oplus\langle e_1,e_2\rangle.
\]

Let $X=\langle \bar e_1+\bar e_3+\bar e_6\rangle.$ Then
\[
\pi_1(X)=\langle e_1\rangle\le E_1,\qquad
\pi_2(X)=\langle e_1\rangle\le E_2,\qquad
\pi_3(X)=\langle e_2\rangle\le E_3.
\]
\end{example}

Two fundamental binary operations in $q$-matroid theory are the direct sum and free product, whose definitions were established in \cite{CeriaJurrius2024DirectSum} and \cite{AlfaranoByrneFulcher2025}, respectively. We restate their definitions here for convenience. In the context of binary operations, we assume that all vector spaces are over the same field.

\begin{definition}\label{def:direct_sum}\cite[Definition 38, Theorem 45]{CeriaJurrius2024DirectSum}
    Let $M_1=(E_1,\rho_1)$ and $M_2=(E_2,\rho_2)$ be $q$-matroids. The \textbf{direct sum} of $M_1$ and $M_2$ is the $q$-matroid $M_1\oplus M_2=(E_1\oplus E_2,\rho)$, where for each $X\leq E_1\oplus E_2$,
    $$\rho(X)=\min\{\rho_1(\pi_1(Y))+\rho_2(\pi_2(Y))+\dim(X)-\dim(Y):Y\leq X\}.$$
\end{definition}

\begin{definition}\label{def:free_product}\cite[Definition 20]{AlfaranoByrneFulcher2025}
    Let $M_1=(E_1,\rho_1)$ and $M_2=(E_2,\rho_2)$ be $q$-matroids. The \textbf{free product} of $M_1$ and $M_2$ is the $q$-matroid $M_1\sq M_2=(E_1\oplus E_2,\rho)$, where $X\in\mathcal{I}(M_1\sq M_2)$ if and only if
    $$\pi_1(X\cap(E_1\oplus0))\in\mathcal{I}(M_1)\qquad\text{and}\qquad
    \rho_1(E_1)-\rho_1(\pi_1(X\cap(E_1\oplus0)))\geq \dim(\pi_2(X))-\rho_2(\pi_2(X)).$$
\end{definition}

We recall the following result from \cite{GluesingLuerssenJany2024}, which draws an important parallel with matroid theory.

\begin{theorem}\label{thm:direct_sum_cyclic_flats}\cite[Theorem 6.2]{GluesingLuerssenJany2024}
    For $q$-matroids $M_1$ and $M_2$,
    $$\mathcal{Z}(M_1\oplus M_2)=\{Z_1\oplus Z_2:Z_1\in\mathcal{Z}(M_1)\textup{ and }Z_2\in\mathcal{Z}(M_2)\}.$$
\end{theorem}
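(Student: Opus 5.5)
The plan is to prove both inclusions directly from the rank formula of Definition~\ref{def:direct_sum}. Two preliminary observations will be used repeatedly. First, for a decomposable space $X=X_1\oplus X_2$ we have $\rho(X)=\rho_1(X_1)+\rho_2(X_2)$.
\begin{itemize}
\item The inequality $\leq$ comes from taking $Y=X$ in the formula.
\item The inequality $\geq$ comes from the unit-increase property. For any $Y\leq X$ we have $\pi_i(Y)\leq X_i$, so $\rho_i(X_i)\leq\rho_i(\pi_i(Y))+\dim X_i-\dim\pi_i(Y)$. Also $\dim\pi_1(Y)+\dim\pi_2(Y)\geq\dim Y$.
\end{itemize}
Second, if $X$ is cyclic in $M_1\oplus M_2$, then the minimum in the rank formula for $\rho(X)$ is attained at $Y=X$. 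Suppose instead that a proper subspace $Y<X$ attains it. Choose a hyperplane $H$ of $X$ with $Y\leq H$. Evaluating the formula for $\rho(H)$ at this same $Y$ gives $\rho(H)\leq\rho(X)-1$, contradicting cyclicity. Hence a cyclic $X$ satisfies $\rho(X)=\rho_1(\pi_1(X))+\rho_2(\pi_2(X))$.

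For the inclusion $\subseteq$, let $Z\in\mathcal{Z}(M_1\oplus M_2)$ and put $W=\pi_1(Z)\oplus\pi_2(Z)$, so that $Z\leq W$. By the two observations,
$$\rho(Z)=\rho_1(\pi_1(Z))+\rho_2(\pi_2(Z))=\rho(W).$$
Since $Z$ is a flat, any $v\in W\setminus Z$ would satisfy $\rho(Z)<\rho(Z\oplus\langle v\rangle)\leq\rho(W)$, which is impossible; hence $Z=W=Z_1\oplus Z_2$. Rank is additive on decomposable spaces, so both properties pass to the components.
\begin{itemize}
\item Flatness: for $v\in E_1\setminus Z_1$ the space $Z\oplus\langle v\rangle$ equals $(Z_1\oplus\langle v\rangle)\oplus Z_2$, and flatness of $Z$ gives $\rho_1(Z_1\oplus\langle v\rangle)>\rho_1(Z_1)$.
\item Cyclicity: a hyperplane $H_1$ of $Z_1$ yields the hyperplane $H_1\oplus Z_2$ of $Z$, and cyclicity of $Z$ gives $\rho_1(H_1)=\rho_1(Z_1)$.
\end{itemize}
The same arguments work for the second component.

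For the inclusion $\supseteq$, let $Z_i\in\mathcal{Z}(M_i)$ and $Z=Z_1\oplus Z_2$. Flatness of $Z$ is checked with the formula. For $v\notin Z$ and any $Y\leq Z\oplus\langle v\rangle$, one bounds $\rho_1(\pi_1Y)+\rho_2(\pi_2Y)+\dim(Z\oplus\langle v\rangle)-\dim Y$ from below by $\rho(Z)+1$. This uses that $\pi_1(v)\notin Z_1$ or $\pi_2(v)\notin Z_2$, together with flatness of the corresponding $Z_i$. I expect this to be routine but fiddly; one could alternatively argue via closure.

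The main obstacle is cyclicity, because a hyperplane $H$ of $Z$ need not be decomposable. For $Y\leq H$ I need
$$\rho_1(\pi_1Y)+\rho_2(\pi_2Y)\;\geq\;\rho(Z)-(\dim Z-1-\dim Y).$$
Cyclicity of $Z_i$ gives the following bounds on $\rho_i(U)$ for $U\leq Z_i$:
\begin{itemize}
\item if $U<Z_i$ is proper, then $U$ lies in a hyperplane of $Z_i$ of full rank, so $\rho_i(U)\geq\rho_i(Z_i)-(\dim Z_i-1-\dim U)$;
\item in general (trivially, by unit increase), $\rho_i(U)\geq\rho_i(Z_i)-(\dim Z_i-\dim U)$.
\end{itemize}
Summing the general bounds gives $\rho(Z)-\dim Z+\dim\pi_1Y+\dim\pi_2Y$. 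This suffices whenever $\dim\pi_1Y+\dim\pi_2Y\geq\dim Y+1$, i.e. whenever $Y$ is not decomposable. If instead $Y=\pi_1Y\oplus\pi_2Y$, then $Y\leq H<Z$ forces some $\pi_iY<Z_i$. The improved bound for that component supplies the missing $+1$. Hence $\rho(H)=\rho(Z)$, so $Z$ is cyclic, completing the proof.
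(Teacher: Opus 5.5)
Your argument is correct, and it takes a different route from the paper simply because the paper gives no proof here: the statement is quoted from \cite[Theorem 6.2]{GluesingLuerssenJany2024} and used as a black box. Your proof is self-contained and works only from the rank formula of Definition~\ref{def:direct_sum} together with unit increase.

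The key lemma is your second observation: for cyclic $X$ the minimum is attained at $Y=X$. Combined with flatness, this forces $Z=\pi_1(Z)\oplus\pi_2(Z)$. That is exactly what separates cyclic flats from mere cyclic spaces, which need not be decomposable (take two loops). For the reverse inclusion you split into ``direct sums of flats are flats'' and ``direct sums of cyclic spaces are cyclic''. Your decomposable/non-decomposable case split for the cyclic part is right.

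The flatness step you left as routine needs no case split. Put $W_i=Z_i+\langle\pi_i(v)\rangle$. Flatness of $Z_i$ gives $\rho_i(W_i)-\dim W_i=\rho_i(Z_i)-\dim Z_i$ whether or not $\pi_i(v)\in Z_i$. Unit increase then yields $\rho_i(\pi_i(Y))\ge\rho_i(Z_i)-\dim Z_i+\dim\pi_i(Y)$ for every $Y\le Z\oplus\langle v\rangle$. Summing and using $\dim\pi_1(Y)+\dim\pi_2(Y)\ge\dim Y$ gives the required lower bound $\rho(Z)+1$.

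Your route is elementary, and your first observation also gives $\rho(Z_1\oplus Z_2)=\rho_1(Z_1)+\rho_2(Z_2)$. That is, it identifies the weighted lattices of cyclic flats, not just the underlying sets. This is what the paper actually needs later in the proof of Theorem~\ref{thm:coord_q-mat_direct_sum}, yet the statement as quoted does not record it. The citation costs the paper nothing in length and places the result within the cyclic-flat decomposition theory of \cite{GluesingLuerssenJany2024}.
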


The important parallel that the above theorem draws with matroid theory is that if $N_1$ and $N_2$ are matroids, then $\mathcal{Z}(N_1\oplus N_2)=\{Z_1\cup Z_2:Z_1\in\mathcal{Z}(N_1)\textup{ and }Z_2\in\mathcal{Z}(N_2)\}$. This analogy with respect to the direct sum does not occur for other $q$-matroidal notions such as independent spaces, flats, and so on.

In the recent paper \cite{saaltink2025theoryqtransversals}, the theory of $q$-transversals was developed. Moreover, a class of $q$-matroids arising from this theory was introduced. These objects are central to this paper, and we recall the relevant definitions here.

\begin{definition}
    Let $S$ be a finite set and let $\mathcal{A}$ be a collection of subsets of $S$. We say that $X\subseteq S$ has an \textbf{avoidance transversal of $\mathcal{A}$} if there is an ordering $X=\{x_1,\dots,x_\ell\}$ and an ordering $\mathcal{A}=(A_1,\dots,A_k)$ such that $x_i\notin A_i$ for each $i\in[\ell]$.
\end{definition}

From the above definition, it is clear that a necessary condition for $X$ to have an avoidance transversal of $\mathcal{A}$ is that $\ell\leq k$. It is a classical result that the set of subsets of $S$ that have avoidance transversals of $\mathcal{A}$ coincides with the set of independent sets of a matroid on the ambient set $S$. We recall the corresponding definition, which is analogous to the classical definition of a \emph{transversal matroid}.

\begin{definition}
    Let $\mathcal{A}=(A_1,\dots,A_k)$ be a collection of subsets of a finite set $S$. An \textbf{avoidance transversal matroid} $M=(S,r)$ with \textbf{presentation} $\mathcal{A}$ is the matroid whose independent sets are precisely those subsets $X\subseteq S$ that have an avoidance transversal of $\mathcal{A}$.
\end{definition}

By taking complements, it is clear that the class of transversal matroids coincides with the class of avoidance transversal matroids. For the $q$-analogue, however, it is necessary to use the notion of avoidance transversals.

\begin{definition}\cite{saaltink2025theoryqtransversals}
    Let $\mathcal{X}=(X_1,\dots,X_k)$ be a collection of subspaces of a vector space $E$. We say that $X\leq E$ is a \textbf{partial $q$-transversal of $\mathcal{X}$} if all linear bases of $X$ has an avoidance transversal of $\mathcal{X}$.
\end{definition}

From the above definition, it was shown in \cite{saaltink2025theoryqtransversals} that the set of partial $q$-transversals of $\mathcal{X}$ coincides with the set of independent spaces of a $q$-matroid. We therefore recall the following definition.

\begin{definition}\cite{saaltink2025theoryqtransversals}
    Let $\mathcal{X}=(X_1,\dots,X_k)$ be a collection of subspaces of a finite vector space $E$. A \textbf{transversal $q$-matroid} $M=(E,\rho)$ with \textbf{presentation} $\mathcal{X}$ is the $q$-matroid whose independent spaces are precisely the subspaces $X\leq E$ that are partial $q$-transversals of $\mathcal{X}$.
\end{definition}

In this paper, we leverage the following representability result from \cite{saaltink2025theoryqtransversals} to show the representability of a class of $q$-matroids not previously known to be representable.

\begin{theorem}\label{thm:saaltink_representable}\cite[Theorem 24]{saaltink2025theoryqtransversals}
    Let $\mathcal{X}$ be a collection of subspaces of $E$. If there exists a fixed linear basis $\beta$ of $E$ such that each member of $\mathcal{X}$ is the span of a subset of $\beta$, then the transversal $q$-matroid with presentation $\mathcal{X}$ is representable.
\end{theorem}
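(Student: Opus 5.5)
The natural route is to write down an explicit \emph{generic} representation over a large extension field and compare the algebra with the combinatorics of avoidance transversals. Write $\beta=\{b_1,\dots,b_n\}$ and $\mathcal{X}=(X_1,\dots,X_k)$ with $X_i=\langle b_j : j\in S_i\rangle$ for $S_i\subseteq[n]$. Identify vectors of $E$ with coordinate vectors with respect to $\beta$. Let $T$ be the $k\times n$ matrix with $T_{ij}=0$ if $j\in S_i$, and $T_{ij}=t_{ij}$ an indeterminate over $\mathbb{F}_q$ otherwise. For a subspace $X$ with basis matrix $Y$ (an $\ell\times n$ matrix over $\mathbb{F}_q$), set $\rho_T(X)=\operatorname{rank}(TY^{\top})$. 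I would show that $\rho_T(X)=\dim X$ exactly when $X$ is a partial $q$-transversal of $\mathcal{X}$. Afterwards I would specialise the $t_{ij}$ to elements of some $\mathbb{F}_{q^m}$ so that every relevant nonzero polynomial stays nonzero. This is possible because there are only finitely many subspaces and hence finitely many polynomials, so $m$ large suffices by Schwartz--Zippel. The specialised $k\times n$ matrix $G$ then represents the transversal $q$-matroid.

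The easy direction uses the fact that by construction row $i$ of $T$ kills $X_i$: if $v\in X_i$ then $(Tv)_i=0$. Suppose $TY^{\top}$ has full column rank $\ell$. This is independent of the chosen basis $Y$, since changing basis multiplies on the right by an invertible $\mathbb{F}_q$-matrix. Some $\ell\times\ell$ minor, on rows $R$, is then nonzero. Its Leibniz expansion contains a nonzero term $\prod_j (Ty_j)_{\sigma(j)}$, and each factor being nonzero forces $y_j\notin X_{\sigma(j)}$. Thus every basis $\{y_1,\dots,y_\ell\}$ of $X$ has an avoidance transversal.

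For the converse, I would expand the minor by Cauchy--Binet:
\[
\det\bigl(T_{R}Y^{\top}\bigr)=\sum_{|S|=\ell}\det(T_{R,S})\det(Y_{[\ell],S}).
\]
Each monomial in the $t_{ij}$ determines $R$, $S$ and the bijection $R\to S$, so there is no cancellation. Hence the polynomial is nonzero iff there are $R$ and $S$ such that $\det Y_{[\ell],S}\neq 0$ and $R$ can be perfectly matched to $S$ in the bipartite graph with edges $i\sim j$ for $j\notin S_i$. Equivalently, the column matroid of $Y$ on $[n]$ (which depends only on $X$) and the avoidance transversal matroid of $(S_1,\dots,S_k)$ on $[n]$ must share a common independent set of size $\ell$.

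The main obstacle is deducing this common independent set from the hypothesis that every basis of $X$ has an avoidance transversal. I would argue by contradiction via Edmonds' matroid intersection theorem. If no common $\ell$-set exists, there is a partition $[n]=A\sqcup B$ with $r_Y(A)+r_{\mathrm{tr}}(B)<\ell$. Then $W=\{x\in X:\operatorname{supp}(x)\subseteq B\}$ has dimension $\ell-r_Y(A)>r_{\mathrm{tr}}(B)$. Extending a basis of $W$ to a basis of $X$, the vectors of $W$ are supported on $B$. I would then use Hall's theorem on the deficient coordinate set $B$ to argue that these vectors cannot all be assigned distinct $X_i$ they avoid: a vector supported on $B$ avoids $X_i$ only if $B\not\subseteq S_i$ on its support. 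I would first try choosing the basis of $W$ in reduced echelon form, so that each vector's support meets a single pivot, and then transferring Hall's deficiency from coordinates to vectors. This yields a basis of $X$ with no avoidance transversal, the desired contradiction. I expect the bookkeeping in this step, turning a coordinate-level Hall obstruction into a vector-level one, to need the most care. Finally, the specialisation step completes the representation.
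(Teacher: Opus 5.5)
The paper gives no proof of this statement; it imports it from \cite{saaltink2025theoryqtransversals}. Your argument therefore has to stand on its own, and most of it does.
\begin{itemize}
\item The generic matrix with the prescribed zero pattern is the right object.
\item The Leibniz argument for the easy direction is correct.
\item The Cauchy--Binet reduction is correct: since there is no cancellation, $TY^{\top}$ has full column rank iff the column matroid $M[Y]$ and the avoidance transversal matroid $N$ of $(S_1,\dots,S_k)$ share an independent set of size $\ell$.
\item The Edmonds certificate, giving $\dim W=\ell-r_Y(A)>r_N(B)$, and the Schwartz--Zippel specialisation are fine.
\end{itemize}

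The gap is exactly the step you flag, and both devices you propose there fail as stated. Neither an arbitrary basis of $W$ nor a reduced echelon basis need lack an avoidance transversal. Take $n=3$, $k=2$, $S_1=S_2=\{1,2\}$ and $X=W=\langle e_1+e_3,e_2+e_3\rangle$. Then $r_N(\{1,2,3\})=1<2=\dim W$. Yet the echelon basis $\{e_1+e_3,e_2+e_3\}$, with one pivot per vector, avoids $X_1$ and $X_2$ respectively. The only bad bases are those containing a nonzero multiple of $e_1-e_2$. Counting over $B$ itself also fails, since $|\{i:B\not\subseteq S_i\}|=2=\dim W$. The Hall deficiency sits on a proper subset of $B$, and the non-pivot coordinates of echelon vectors leak out of it.

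The missing idea is the deficiency (K\"onig--Ore) form of Hall's theorem:
$r_N(B)=\min_{B'\subseteq B}\bigl(|B\setminus B'|+|\Gamma(B')|\bigr)$, where $\Gamma(B')=\{i\in[k]:B'\not\subseteq S_i\}$.
\begin{itemize}
\item Take a minimising $B'$ and put $W'=\{x\in X:\operatorname{supp}(x)\subseteq B'\}$. Then $\dim W'\ge\dim W-|B\setminus B'|>|\Gamma(B')|$.
\item Every $w\in W'$ lies in $X_i$ whenever $B'\subseteq S_i$. So in an avoidance transversal, the vectors of a basis of $W'$ would need distinct indices in $\Gamma(B')$, which is impossible by counting.
\item Hence any basis of $X$ extending a basis of $W'$ has no avoidance transversal, which gives your contradiction.
\end{itemize}
Your echelon idea can be rescued by ordering the columns of $B\setminus B'$ first, but $B'$ has to come from this deficiency formula.

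Combining the two min--max formulas, the whole criterion collapses to a Rado-type condition: $\dim\bigl(X\cap\langle e_j:j\in B'\rangle\bigr)\le|\Gamma(B')|$ for all $B'\subseteq[n]$. Its necessity for partial $q$-transversals is exactly the argument above. With this step supplied, your proof is complete.
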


\section{The cyclic flat embedding theorem}

This section contains the main result of this paper. Our goal is to formalise a correspondence between matroids and a class of $q$-matroids whose cyclic flats are \emph{coordinate}. Through this correspondence, structural questions about such $q$-matroids can be reduced to the corresponding questions for matroids.

In particular, we show that if the cyclic flats of a $q$-matroid are coordinate, then the questions of whether it is transversal and representable can be decided by examining its \emph{corresponding matroid}. The resulting cyclic flat embedding theorem provides a mechanism for transferring structural results from transversal matroid theory to the $q$-matroid setting.

\begin{definition}
    A subspace of $\mathbb{F}_q^n$ spanned by a subset of the standard basis vectors is called \textbf{coordinate}. If $M$ is a $q$-matroid whose cyclic flats are all coordinate, we call $M$ a \textbf{coordinate $q$-matroid}.
\end{definition}

We fix the following notation.

\begin{definition}
    Let $\mathcal{L}(\mathbb{F}_q^n)$ denote the lattice of subspaces of $\mathbb{F}_q^n$. Define $\phi:2^{[n]}\to\mathcal{L}(\mathbb{F}_q^n)$ by 
    $$\phi(S)=\langle e_i:i\in S\rangle\quad \textup{for each } S\subseteq[n].$$ 
    For a subspace $X\leq\mathbb{F}_q^n$, we denote by $\phi^{-1}(X)$ the preimage of $X$ under $\phi$. We extend $\phi$ and $\phi^{-1}$ to subsets of the respective domains in the natural way.
\end{definition}

The following lemma establishes a correspondence between matroids and coordinate $q$-matroids. Its proof follows immediately from the cyclic flat axioms for matroids \cite[Theorem 3.2]{BoninDeMier2008} and $q$-matroids \cite[Definition 4.1]{AlfaranoByrne2024CyclicFlats}.

\begin{lemma}\label{lem:corresponding_matroid}
\begin{enumerate}
    \item For any coordinate $q$-matroid with weighted cyclic flats $(\mathcal{Z},\lambda)$, there is a matroid with weighted cyclic flats $(\phi^{-1}(\mathcal{Z}),\lambda\circ\phi)$.
    
    \item For any matroid with weighted cyclic flats $(\mathcal{Z},\lambda)$, there is a coordinate $q$-matroid with weighted cyclic flats $(\phi(\mathcal{Z}),\lambda\circ\phi^{-1})$.
\end{enumerate}
\end{lemma}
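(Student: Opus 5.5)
The plan is to verify directly that the cyclic flat axioms for matroids \cite[Theorem 3.2]{BoninDeMier2008} and for $q$-matroids \cite[Definition 4.1]{AlfaranoByrne2024CyclicFlats} correspond under $\phi$. The key fact is that $\phi$ restricts to a lattice isomorphism from the Boolean lattice $2^{[n]}$ onto the sublattice of coordinate subspaces of $\mathbb{F}_q^n$. In detail, $\phi(S\cup T)=\phi(S)+\phi(T)$, $\phi(S\cap T)=\phi(S)\cap\phi(T)$ and $|S|=\dim\phi(S)$. Moreover, $\phi(\emptyset)=0$, $\phi([n])=\mathbb{F}_q^n$, and $\phi$ is order-preserving and order-reflecting. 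The intersection identity uses that the standard basis vectors are linearly independent, so a vector in both $\phi(S)$ and $\phi(T)$ has support in $S\cap T$. Once these facts are recorded, each axiom on one side translates term-by-term into the corresponding axiom on the other.

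Next I would list the axioms. For matroids (Bonin--de Mier) these are:
\begin{enumerate}
    \item[(Z0)] $(\mathcal{Z},\subseteq)$ is a lattice;
    \item[(Z1)] $\lambda(0_{\mathcal{Z}})=0$;
    \item[(Z2)] $0<\lambda(Y)-\lambda(X)<|Y\setminus X|$ for $X\subsetneq Y$ in $\mathcal{Z}$;
    \item[(Z3)] $\lambda(X)+\lambda(Y)\ge \lambda(X\vee Y)+\lambda(X\wedge Y)+|(X\cap Y)\setminus (X\wedge Y)|$.
\end{enumerate}
The $q$-axioms (Alfarano--Byrne) are the same with $|Y\setminus X|$ replaced by $\dim Y-\dim X$ and $|(X\cap Y)\setminus(X\wedge Y)|$ replaced by $\dim(X\cap Y)-\dim(X\wedge Y)$. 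Here join and meet are taken in the poset $(\mathcal{Z},\le)$.

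For part 1, I would take a coordinate $q$-matroid with $(\mathcal{Z},\lambda)$. Since every element of $\mathcal{Z}$ is coordinate, $\phi^{-1}$ is a bijection from $\mathcal{Z}$ onto $\phi^{-1}(\mathcal{Z})$, and it is order-preserving in both directions. So $\phi^{-1}(\mathcal{Z})$ is a lattice isomorphic to $\mathcal{Z}$, and joins and meets in the two posets correspond. This gives (Z0); (Z1) is immediate. For (Z2), I would use $|Y\setminus X|=|Y|-|X|=\dim\phi(Y)-\dim\phi(X)$ for $X\subseteq Y$. For (Z3), I would use $\phi(X\cap Y)=\phi(X)\cap\phi(Y)$, together with the fact that $X\wedge Y\subseteq X\cap Y$, so $|(X\cap Y)\setminus(X\wedge Y)|=\dim(\phi(X)\cap\phi(Y))-\dim(\phi(X\wedge Y))$. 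Part 2 is the same computation read in the other direction: $\phi(\mathcal{Z})$ is a lattice isomorphic to $\mathcal{Z}$ under $\phi$, its members are coordinate by construction, and the resulting $q$-matroid is therefore a coordinate $q$-matroid.

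The only point needing care is that joins and meets are taken inside the lattice of cyclic flats, not in $2^{[n]}$ or $\mathcal{L}(\mathbb{F}_q^n)$. The transfer of (Z3) must not confuse $X\wedge Y$ with $X\cap Y$. This is handled by using the poset isomorphism for $\wedge,\vee$ and the ambient identity $\phi(X\cap Y)=\phi(X)\cap\phi(Y)$ only for the defect term. I would also check that the Alfarano--Byrne formulation has no extra axiom with no matroid analogue. If it does, I expect it to be phrased entirely in terms of lattice operations and dimensions of intersections and sums, and hence to transfer in the same way.
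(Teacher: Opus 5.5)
Your proposal is correct and takes the approach the paper intends: the paper simply asserts that the lemma follows immediately from the Bonin--de Mier and Alfarano--Byrne cyclic flat axioms. Your axiom-by-axiom transfer supplies the omitted details, using the order isomorphism $\phi\colon\phi^{-1}(\mathcal{Z})\to\mathcal{Z}$ (so that joins and meets in the two lattices of cyclic flats correspond), together with $|S|=\dim\phi(S)$ and $\phi(X\cap Y)=\phi(X)\cap\phi(Y)$ for the defect term in (Z3).
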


Using Lemma~\ref{lem:corresponding_matroid}, together with the fact that a $q$-matroid or matroid is uniquely determined by its weighted lattice of cyclic flats \cite{AlfaranoByrne2024CyclicFlats,BoninDeMier2008}, we make the following definition.

\begin{definition}
    Suppose that $M$ is a coordinate $q$-matroid with weighted cyclic flats $(\mathcal{Z},\lambda)$. We call the matroid with weighted cyclic flats $(\phi^{-1}(\mathcal{Z}),\lambda\circ\phi)$ the \textbf{corresponding matroid} of $M$.
    
    Suppose instead that $M$ is a matroid with weighted cyclic flats $(\mathcal{Z},\lambda)$. We call the $q$-matroid with weighted cyclic flats $(\phi(\mathcal{Z}),\lambda\circ\phi^{-1})$ the \textbf{corresponding $q$-matroid} of $M$.
\end{definition}

For two vectors $u,v\in\mathbb{F}_q^n$, we denote by $u\cdot v$ the standard dot product. In the following definition, we establish notation for what is often called the \emph{support} of a vector.

\begin{definition}
    Let $\sigma:\mathbb{F}_q^n\to 2^{[n]}$ be defined by $\sigma(v)=\{i\in[n]:v\cdot e_i\neq 0\}$ for all $v\in\mathbb{F}_q^n$. We extend $\sigma$ to subspaces of $\mathbb{F}_q^n$ by defining $\sigma(V)=\bigcup_{x\in V}\sigma(x)$ for each $V\leq \mathbb{F}_q^n$.
\end{definition}

We now focus on the correspondence between coordinate $q$-matroids and transversal matroids. We will show that if the corresponding matroid of a coordinate $q$-matroid is transversal, then this $q$-matroid is transversal and representable. As we will see, this correspondence provides a class of representable $q$-matroids that was not previously known to be representable.

For the remainder of this section, unless otherwise specified, we fix the following notation:
\begin{itemize}
    \item Let $M=([n],r)$ be an avoidance transversal matroid with presentation $\mathcal{S}=(S_1,\dots,S_k)$.
    \item Let $\phi(M)=(\mathbb{F}_q^n,\rho)$ be the transversal $q$-matroid with presentation $\mathcal{X}=(X_1,\dots,X_k)$, where $X_i=\phi(S_i)$ for each $i\in[k]$.
\end{itemize}

We will show that $\mathcal{Z}(\phi(M))=\phi(\mathcal{Z}(M))$ and that $\rho(\phi(Z))=r(Z)$ for all $Z\in\mathcal{Z}(M)$. Since a $q$-matroid is uniquely determined by its weighted lattice of cyclic flats, it will follow that the corresponding matroid of $\phi(M)$ is precisely $M$.

The following lemma will be used repeatedly. Its proof is immediate and therefore omitted.

\begin{lemma}\label{lem:vector_to_support_containment}
    For each $v\in\mathbb{F}_q^n$ and $i\in[k]$, we have $v\in X_i$ if and only if $\sigma(v)\subseteq S_i$.
\end{lemma}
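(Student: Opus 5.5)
We prove the two implications separately, working directly from the definitions of $\phi$ and $\sigma$. Recall that $X_i=\phi(S_i)=\langle e_j : j\in S_i\rangle$. The approach is to expand $v$ in the standard basis and compare its nonzero coordinates with the index set $S_i$.

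Write $v=\sum_{j=1}^n v_j e_j$ with $v_j\in\mathbb{F}_q$. Since $v\cdot e_j=v_j$, we have $\sigma(v)=\{j\in[n]: v_j\neq 0\}$.

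For the reverse implication, assume $\sigma(v)\subseteq S_i$. Then
\[
v=\sum_{j\in\sigma(v)} v_j e_j ,
\]
since all other coordinates vanish. Every $e_j$ in this sum has $j\in S_i$, so $v$ is a linear combination of generators of $X_i$. Hence $v\in X_i$. This includes the case $v=0$, where $\sigma(v)=\emptyset$.

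For the forward implication, assume $v\in X_i$. Then $v=\sum_{j\in S_i} c_j e_j$ for some scalars $c_j\in\mathbb{F}_q$. The standard basis vectors are linearly independent, so the coordinates of $v$ are uniquely determined by this expression. It follows that $v_j=c_j$ for $j\in S_i$ and $v_j=0$ for $j\notin S_i$. Therefore every index $j$ with $v_j\neq0$ lies in $S_i$, which gives $\sigma(v)\subseteq S_i$.

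The only point needing care is the uniqueness of coordinates with respect to the standard basis, used in the forward direction. This holds because the standard basis is a basis. No earlier results of the paper are required, and there is no genuine obstacle.
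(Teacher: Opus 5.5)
Your proof is correct and is the direct check the paper has in mind; the paper omits the proof as immediate. Expanding $v$ in the standard basis and using uniqueness of coordinates, with the empty-support case included, is exactly that verification.
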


We begin by showing that independent sets of $M$ map to independent spaces of $\phi(M)$.

\begin{lemma}\label{lem:indep_to_indep}
    Let $I\in\mathcal{I}(M)$. Then $\phi(I)\in\mathcal{I}(\phi(M))$.
\end{lemma}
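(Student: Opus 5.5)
We have $I=\{i_1,\dots,i_\ell\}$ with an avoidance transversal: after reordering $\mathcal{S}$, $i_j\notin S_j$ for $j\in[\ell]$. By definition of partial $q$-transversal, we must show that \emph{every} linear basis $\{v_1,\dots,v_\ell\}$ of $\phi(I)$ admits an ordering and an ordering of $\mathcal{X}$ with $v_{j}\notin X_{j}$. By Lemma~\ref{lem:vector_to_support_containment}, $v\notin X_j$ if and only if $\sigma(v)\not\subseteq S_j$. So it suffices to find a bijection $\tau$ from the basis vectors to $[\ell]$ with $\sigma(v)\not\subseteq S_{\tau(v)}$. It is enough to have $i_{\tau(v)}\in\sigma(v)$, since $i_j\notin S_j$.

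The key step is therefore a matching argument. Write the basis as the rows of an $\ell\times n$ matrix $A$. Its columns outside $I$ vanish, since every $v_j\in\phi(I)$. Thus the $\ell\times\ell$ submatrix $A_I$ on columns $i_1,\dots,i_\ell$ is invertible, because the rows are linearly independent and supported in $I$. Expand $\det A_I$ as a sum over permutations. Since $\det A_I\neq 0$, some permutation $\tau$ has $\prod_j (A_I)_{j,\tau(j)}\neq 0$. This says exactly that $i_{\tau(j)}\in\sigma(v_j)$ for all $j$.

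Now order the basis as $v_{\tau^{-1}(1)},\dots,v_{\tau^{-1}(\ell)}$ and keep the ordering $(X_1,\dots,X_k)$ inherited from the reordering of $\mathcal{S}$. Then $i_j\in\sigma(v_{\tau^{-1}(j)})$ and $i_j\notin S_j$, so $\sigma(v_{\tau^{-1}(j)})\not\subseteq S_j$. By Lemma~\ref{lem:vector_to_support_containment}, $v_{\tau^{-1}(j)}\notin X_j$, which is the required avoidance transversal. Since the basis was arbitrary, $\phi(I)\in\mathcal{I}(\phi(M))$.

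The only nontrivial point is extracting a support-respecting bijection from linear independence, which the determinant expansion handles. Alternatively, one could use Hall's theorem: any $m$ of the basis vectors span an $m$-dimensional space, so their supports have union of size at least $m$ within $I$.
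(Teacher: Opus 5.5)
Your proof is correct and follows the same route as the paper. Both arguments find a bijection from the basis vectors to $I$ in which each vector's support contains its matched coordinate, then transfer the avoidance transversal of $I$ to the basis via Lemma~\ref{lem:vector_to_support_containment}. The paper simply invokes Hall's theorem for the matching without checking Hall's condition, whereas your nonzero term in the expansion of $\det A_I$ (or your check of Hall's condition) actually justifies that step.
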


\begin{proof}
    Let $\beta$ be a linear basis of $\phi(I)$. By Hall's theorem, the family $\{\sigma(b):b\in\beta\}$ admits a system of distinct representatives, giving a matching from $\beta$ to $I$. Since $I$ has an avoidance transversal in $\mathcal{S}$, Lemma~\ref{lem:vector_to_support_containment} implies that the matched coordinates yield an avoidance transversal of $\beta$ in $\mathcal{X}$. Since our choice of $\beta$ was arbitrary, it follows that $\phi(I)\in\mathcal{I}(\phi(M))$.
\end{proof}

Next, we show that the flats of $M$ are mapped to flats of $\phi(M)$.

\begin{lemma}\label{lem:flats_to_flats}
    Let $F\in\mathcal{F}(M)$. Then $\phi(F)\in\mathcal{F}(\phi(M))$.
\end{lemma}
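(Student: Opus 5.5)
We show directly that $\rho(\phi(F)\oplus\langle v\rangle)>\rho(\phi(F))$ for every $v\in\mathbb{F}_q^n\setminus\phi(F)$. The plan is to prove two inequalities: $\rho(\phi(F))\le r(F)$, and $\rho(\phi(F)\oplus\langle v\rangle)\ge r(F)+1$. Together they give the flat condition.

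\emph{Lower bound.} Let $I$ be a basis of $F$ in $M$. Since $v\notin\phi(F)$, there is some $j\in\sigma(v)\setminus F$. Because $F$ is a flat of $M$, we have $r(F\cup\{j\})=r(F)+1$, so $I\cup\{j\}\in\mathcal{I}(M)$. Put $W=\phi(I)\oplus\langle v\rangle$, which has dimension $|I|+1$. We claim that $W$ is independent in $\phi(M)$.

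Consider the coordinate projection of $W$ onto the coordinates in $I\cup\{j\}$. This map is injective. Indeed, write $w=u+cv$ with $u\in\phi(I)$. If $w$ vanishes on $I\cup\{j\}$, then its $j$-th coordinate is $cv_j=0$ (as $j\notin I$), so $c=0$, and then $u=0$.

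Now let $\beta$ be any linear basis of $W$. The $(|I|+1)\times(|I|+1)$ matrix of the restrictions of the vectors of $\beta$ to $I\cup\{j\}$ is nonsingular. Hence some term in its determinant expansion is nonzero. This gives a bijection $b\mapsto x_b$ from $\beta$ to $I\cup\{j\}$ with $x_b\in\sigma(b)$.

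Take an avoidance transversal of $I\cup\{j\}$ in $\mathcal{S}$, and compose it with this bijection. If $x_b\notin S_i$, then $\sigma(b)\not\subseteq S_i$, so $b\notin X_i$ by Lemma~\ref{lem:vector_to_support_containment}. This is exactly the argument of Lemma~\ref{lem:indep_to_indep}. So every basis of $W$ has an avoidance transversal of $\mathcal{X}$, and therefore $\rho(\phi(F)\oplus\langle v\rangle)\ge r(F)+1$.

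\emph{Upper bound.} This is the step I expect to be the main obstacle. Independence of a subspace does not obviously transfer back to a set of coordinates, so I will use the König/Rado min–max formula for the rank of the transversal matroid instead. Set $T_i=[n]\setminus S_i$, so that an element $x$ may be matched to index $i$ exactly when $x\in T_i$. The formula gives some $J\subseteq[k]$ with
$$r(F)=(k-|J|)+\Bigl|F\setminus\bigcap_{i\in J}S_i\Bigr|.$$
Let $D=F\cap\bigcap_{i\in J}S_i$.

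Now let $Y\le\phi(F)$ be independent in $\phi(M)$. Choose a basis of $Y$ that extends a basis of $Y\cap\phi(D)$. Every vector of $Y\cap\phi(D)$ has support inside $D\subseteq S_i$ for each $i\in J$, so by Lemma~\ref{lem:vector_to_support_containment} it lies in $X_i$ for all $i\in J$. In any avoidance transversal these vectors must therefore be assigned distinct indices outside $J$, which gives $\dim(Y\cap\phi(D))\le k-|J|$. Moreover,
$$\dim Y-\dim(Y\cap\phi(D))\le\dim\phi(F)-\dim\phi(D)=|F|-|D|.$$
Adding the two bounds gives $\dim Y\le r(F)$, and hence $\rho(\phi(F))\le r(F)$.

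Combining the two inequalities yields $\rho(\phi(F))<\rho(\phi(F)\oplus\langle v\rangle)$ for every $v\notin\phi(F)$. Hence $\phi(F)\in\mathcal{F}(\phi(M))$. As a by-product we obtain $\rho(\phi(F))=r(F)$, since the lower bound applied to $\phi(I)$ alone (Lemma~\ref{lem:indep_to_indep}) gives $\rho(\phi(F))\ge r(F)$.
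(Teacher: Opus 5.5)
Your proof is correct and complete. It also proves a step that the paper's proof leaves out.

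\textbf{Lower bound.} This half is essentially the paper's argument. The paper also takes a basis $B$ of $F$ and picks $j\in\sigma(v)\setminus F$. It matches an arbitrary linear basis of $\phi(B)\oplus\langle v\rangle$ onto $B\cup\{j\}$ within supports, then composes with an avoidance transversal of $B\cup\{j\}$ in $\mathcal{S}$. The paper only cites Hall's theorem for the matching. Your argument (injectivity of the coordinate projection onto $I\cup\{j\}$, then a nonzero term in the determinant expansion) actually justifies that the matching exists.

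\textbf{Upper bound.} Here the routes differ. The paper stops once $\phi(B)\oplus\langle v\rangle$ is shown to be independent, and concludes that $v$ raises the rank of $\phi(F)$. That inference silently uses $\rho(\phi(F))=|B|=r(F)$, i.e.\ that $\phi(B)$ is a basis of $\phi(F)$ in $\phi(M)$. This is not established at that point: rank equality is proved only later, in Lemma~\ref{lem:ranks_agree}, and only for cyclic flats. Your K\"onig--Rado bound $\rho(\phi(F))\le r(F)$ supplies exactly this missing step.

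Your upper bound never uses that $F$ is a flat. Combined with Lemma~\ref{lem:indep_to_indep}, it gives $\rho(\phi(X))=r(X)$ for every $X\subseteq[n]$, which subsumes Lemma~\ref{lem:ranks_agree}.

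An alternative for the same step, closer to the paper's later lemma, avoids the min--max theorem. First show $\phi(B)$ is an inclusion-maximal independent subspace of $\phi(F)$. For $v\in\phi(F)\setminus\phi(B)$, test the basis $\{e_b:b\in B\}\cup\{v'\}$, where $v'$ is the part of $v$ supported on $F\setminus B$. The index assigned to $v'$ yields some $\ell\in F\setminus B$ with $B\cup\{\ell\}$ independent, which is a contradiction. Then invoke equidimensionality of bases in a $q$-matroid. Your counting argument reaches the same conclusion without appealing to that fact.
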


\begin{proof}
    Let $B$ be a basis of $F$ in $M$. By Lemma~\ref{lem:indep_to_indep}, $\phi(B)\in\mathcal{I}(\phi(M))$. Let $v\in\mathbb{F}_q^n\setminus\phi(F)$. Then there exists $j\in\sigma(v)$ such that $j\notin F$, and hence $e_j\notin\phi(F)$.

    Let $\beta$ be any linear basis of $\phi(B)\oplus\langle v\rangle$. Since $j\in\sigma(v)$ and $j\notin F$, Hall's theorem implies that the family $\{\sigma(b):b\in\beta\}$ admits a matching covering $B\cup\{j\}$. By Lemma~\ref{lem:vector_to_support_containment}, $e_j\notin X_i$ implies $v\notin X_i$ for any $i\in[k]$.

    Since every linear basis of $\phi(B)\oplus\langle e_j\rangle$ has an avoidance transversal in $\mathcal{X}$, it follows that $\beta$ also has an avoidance transversal. Thus $\phi(B)\oplus\langle v\rangle\in\mathcal{I}(\phi(M))$.

    Therefore every $v\notin\phi(F)$ increases the rank of $\phi(F)$ in $\phi(M)$, and so $\phi(F)$ is a flat of $\phi(M)$.
\end{proof}

Next, we show that cycles of $M$ map to cycles of $\phi(M)$. For a subspace $V$, we denote by $\textup{Hyp}(V)$ the set of hyperplanes of $V$. We also establish the following notation.

\begin{notation}
    For $i\in[n]$, we denote by $\pi_{\setminus i}$ the canonical projection
    $$\pi_{\setminus i}:\mathbb{F}_q^n\to \langle e_1,\dots,e_{i-1},e_{i+1},\dots,e_n\rangle.$$
    We extend $\pi_{\setminus i}$ to subspaces in the natural way. Moreover, we denote
    $$\mathcal{X}_{\setminus i}=(\pi_{\setminus i}(X_1),\dots,\pi_{\setminus i}(X_k)).$$
\end{notation}

\begin{lemma}\label{lem:cycles_to_cycles}
    Let $C\in\mathcal{C}(M)$. Then $\phi(C)\in\mathcal{C}(\phi(M))$.
\end{lemma}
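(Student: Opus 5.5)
The plan is to take an arbitrary hyperplane $H\in\textup{Hyp}(\phi(C))$ and establish the two inequalities $\rho(H)\geq r(C)$ and $\rho(\phi(C))\leq r(C)$. Since rank is monotone, together they give $\rho(H)=\rho(\phi(C))$ for every hyperplane $H$, which is exactly cyclicity of $\phi(C)$. As a by-product, Lemma~\ref{lem:indep_to_indep} will give $\rho(\phi(C))=r(C)$, which should be useful later.

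\emph{Lower bound.} Since $H\neq\phi(C)$, there is some $i\in C$ with $e_i\notin H$. Then $\phi(C)=H\oplus\langle e_i\rangle$, so $\pi_{\setminus i}$ restricts to an isomorphism from $H$ onto $\phi(C\setminus\{i\})$. Because $C$ is cyclic in $M$, we have $r(C\setminus\{i\})=r(C)$. Choose a basis $B$ of $C\setminus\{i\}$ in $M$, so $|B|=r(C)$, and let $H'\leq H$ be the preimage of $\phi(B)$ under this isomorphism. Every vector of $H'$ has the form $u+ce_i$ with $u\in\phi(B)$, and $\dim H'=|B|$.

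I claim $H'\in\mathcal{I}(\phi(M))$, by the same argument as in Lemma~\ref{lem:indep_to_indep}. Let $\beta$ be any linear basis of $H'$. The coordinates of $\beta$ indexed by $B$ form a square matrix, which is invertible because $\pi_{\setminus i}(\beta)$ is a basis of $\phi(B)$. A nonzero term in the Leibniz expansion of its determinant gives a bijection $b\mapsto j_b\in B$ with $j_b\in\sigma(b)$. Now take the avoidance transversal of $B$ in $\mathcal{S}$ and assign to $b$ the index $m$ with $j_b\notin S_m$. Then $\sigma(b)\not\subseteq S_m$, so $b\notin X_m$ by Lemma~\ref{lem:vector_to_support_containment}. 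The extra $e_i$-components play no role here. This gives an avoidance transversal of $\beta$ in $\mathcal{X}$, so $\rho(H)\geq\dim H'=r(C)$.

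\emph{Upper bound.} This is the step I expect to need the most care, since it is a genuine $q$-analogue of a min-max formula. Put $Y_m=C\setminus S_m$. By the König--Hall (Rado) rank formula for avoidance transversal matroids,
$$r(C)=\min_{T\subseteq[k]}\Bigl(\bigl|\textstyle\bigcup_{m\in T}Y_m\bigr|+k-|T|\Bigr).$$
Fix $T$ attaining this minimum, and let $W=\phi\bigl(C\cap\bigcap_{m\in T}S_m\bigr)$. Let $V\leq\phi(C)$ be independent in $\phi(M)$, and pick a basis of $V$ that extends a basis of $V\cap W$.

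Every vector of $V\cap W$ lies in $X_m$ for all $m\in T$, by Lemma~\ref{lem:vector_to_support_containment}. Hence in any avoidance transversal those vectors must be assigned indices outside $T$, giving $\dim(V\cap W)\leq k-|T|$. On the other hand, $\dim V-\dim(V\cap W)\leq\dim\phi(C)-\dim W=\bigl|\bigcup_{m\in T}Y_m\bigr|$. Adding the two bounds gives $\dim V\leq r(C)$, hence $\rho(\phi(C))\leq r(C)$.

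Combining the two bounds, $\rho(H)\geq r(C)\geq\rho(\phi(C))\geq\rho(H)$, so $\rho(H)=\rho(\phi(C))$ for all $H\in\textup{Hyp}(\phi(C))$. Therefore $\phi(C)\in\mathcal{C}(\phi(M))$.
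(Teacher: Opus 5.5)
Your proof is correct, but it takes a different route from the paper. The paper first handles a circuit $C$. There, every hyperplane $H$ of $\phi(C)$ maps isomorphically under $\pi_{\setminus i}$ onto $\phi(C\setminus\{i\})$, which is independent by Lemma~\ref{lem:indep_to_indep}, and the inclusion $\sigma(\pi_{\setminus i}(b))\subseteq\sigma(b)$ lifts avoidance transversals back to $H$. So every hyperplane is independent while $\phi(C)$ is dependent. A general cycle is then written as a union of circuits, and the paper invokes the fact that sums of cyclic spaces in a $q$-matroid are cyclic.

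You instead treat an arbitrary cyclic set directly. For the lower bound you lift only a basis $B$ of $C\setminus\{i\}$ into $H$; your Leibniz-expansion step is a clean, rigorous replacement for the paper's loose appeal to Hall's theorem. For the matching upper bound $\rho(\phi(C))\le r(C)$ you use a $q$-analogue of the vertex-cover half of the K\"onig--Ore defect formula.

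The paper's route needs only independence checks and no min-max formula. It does, however, rely on two structural facts: cycles are unions of circuits, and cyclic spaces are closed under sums. Your route is self-contained apart from the classical rank formula for transversal matroids, and it yields more. Your upper-bound argument never uses that $C$ is cyclic. Combined with Lemma~\ref{lem:indep_to_indep}, it therefore gives $\rho(\phi(X))=r(X)$ for every $X\subseteq[n]$. That makes Lemma~\ref{lem:ranks_agree} and the later lemma on images of non-flats immediate.
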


\begin{proof}
    We first prove the result in the case that $C$ is a circuit of $M$. Clearly, since there is no avoidance transversal of $C$, there is no avoidance transversal of $\{e_i:i\in C\}$ and therefore $\phi(C)$ is not independent in $\phi(M)$. For each $i\in C$, the set $C\setminus\{i\}$ is a basis of $C$. We will show that every $H<\phi(C)$ is independent in $\phi(M)$.
    
    Let $H_q\in\textup{Hyp}(\phi(C))$.
    By a basic result of linear algebra, there exists an $i\in C$ such that $\pi_{\setminus i}(H_q)=\phi(C\setminus\{i\})$. Therefore, for any linear basis $\beta$ of $H_q$, the set of vectors $\beta'=\{\pi_{\setminus i}(b):b\in\beta\}$ is a linear basis of $\phi(C\setminus\{i\})$. By Lemma~\ref{lem:indep_to_indep}, there is an avoidance transversal for $\beta'$ of $\mathcal{X}$. Since $\sigma(\pi_{\setminus i}(b))\subseteq\sigma(b)$ for all $b\in\beta$, it follows by Lemma~\ref{lem:vector_to_support_containment} that $\beta$ has an avoidance transversal of $\mathcal{X}$. It follows that $H_q$ is independent in $\phi(M)$. Therefore $\phi(C)$ is cyclic in $\phi(M)$.

    Now let $C\in\mathcal{C}(M)$ be arbitrary. Since the cycles of a matroid are precisely the unions of its circuits, we may write $C=\bigcup_{j=1}^m C_j$ where each $C_j$ is a circuit of $M$. By the first part of the proof, each $\phi(C_j)$ is cyclic in $\phi(M)$. Since
    \[
        \phi(C)=\phi\left(\bigcup_{j=1}^m C_j\right)=\sum_{j=1}^m \phi(C_j),
    \]
    and the sum of cyclic spaces in a $q$-matroid is cyclic, it follows that $\phi(C)\in\mathcal{C}(\phi(M))$.
\end{proof}

We now show that the only cyclic flats of $\phi(M)$ lying in the image of $\phi$ arise from cyclic flats of $M$. This follows from the following lemma.

\begin{lemma}
    Let $X\in 2^{[n]}\setminus\mathcal{F}(M)$. Then $\phi(X)\notin\mathcal{F}(\phi(M))$.
\end{lemma}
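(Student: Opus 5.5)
Since $X$ is not a flat of $M$, there is some $j\in[n]\setminus X$ with $r(X\cup\{j\})=r(X)$. We will show that the vector $e_j\notin\phi(X)$ does not increase the rank of $\phi(X)$ in $\phi(M)$, that is, $\rho(\phi(X)\oplus\langle e_j\rangle)=\rho(\phi(X))$. Since $\phi(X)\oplus\langle e_j\rangle=\phi(X\cup\{j\})$, this amounts to comparing $\rho(\phi(X))$ with $\rho(\phi(X\cup\{j\}))$, and it shows directly that $\phi(X)\notin\mathcal{F}(\phi(M))$.

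The plan has two steps. First, we prove that $\rho(\phi(Y))=r(Y)$ for every $Y\subseteq[n]$. For the lower bound, take a basis $B$ of $Y$ in $M$. By Lemma~\ref{lem:indep_to_indep}, $\phi(B)$ is independent in $\phi(M)$ and is contained in $\phi(Y)$, so $\rho(\phi(Y))\geq |B|=r(Y)$.

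For the upper bound, let $J\leq\phi(Y)$ be independent in $\phi(M)$ with $\dim J=\rho(\phi(Y))$. Put $J$ in reduced row echelon form with respect to the coordinates in $Y$. This gives a linear basis $\beta$ of $J$ together with distinct pivot coordinates $p(b)\in\sigma(b)\subseteq Y$ for $b\in\beta$. Since $J$ is a partial $q$-transversal, $\beta$ has an avoidance transversal: an injection $b\mapsto A_{i(b)}$ with $b\notin X_{i(b)}$. By Lemma~\ref{lem:vector_to_support_containment}, $b\notin X_{i(b)}$ means $\sigma(b)\not\subseteq S_{i(b)}$, so there is some coordinate of $\sigma(b)$ outside $S_{i(b)}$. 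However, we need \emph{distinct} coordinates that avoid the matched sets, and the pivots need not be the coordinates witnessing $\sigma(b)\not\subseteq S_{i(b)}$.

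This is the main obstacle. To resolve it, we would use the freedom of choosing the basis. The definition of partial $q$-transversal requires every linear basis of $J$ to have an avoidance transversal. In particular, this holds for all bases obtained by applying triangular changes of basis. We then argue via Hall's theorem in the bipartite graph between $Y$ and $[k]$ (coordinate $y$ adjacent to $i$ if $y\notin S_i$), which is exactly the graph whose matchings give the independent sets of $M$. Suppose $M|Y$ had rank less than $\dim J$. By Hall (König), there is a set $T$ of indices with $|T|$ small such that every coordinate of $Y$ outside some deficient set lies in all $S_i$ for $i\notin T$, and only few coordinates can be matched into $T$.

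Concretely, by the defect form of Hall's theorem there exist $Y_0\subseteq Y$ and $T\subseteq[k]$ such that every $y\in Y_0$ lies in $S_i$ for all $i\notin T$, and $|Y\setminus Y_0|+|T|=r(Y)$. Now $J\cap\phi(Y_0)$ has dimension at least $\dim J-|Y\setminus Y_0|$. Every vector of $\phi(Y_0)$ lies in $X_i$ for all $i\notin T$, by Lemma~\ref{lem:vector_to_support_containment}. Choose a basis of $J$ extending a basis of $J\cap\phi(Y_0)$. In any avoidance transversal of this basis, the vectors from $J\cap\phi(Y_0)$ must be matched into $T$, so $\dim(J\cap\phi(Y_0))\leq |T|$. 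Hence $\dim J\leq |T|+|Y\setminus Y_0|=r(Y)$. This gives $\rho(\phi(Y))=r(Y)$.

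Second, we apply this with $Y=X$ and $Y=X\cup\{j\}$ to get
\[
\rho(\phi(X)\oplus\langle e_j\rangle)=r(X\cup\{j\})=r(X)=\rho(\phi(X)),
\]
so $e_j\notin\phi(X)$ witnesses that $\phi(X)$ is not a flat of $\phi(M)$.
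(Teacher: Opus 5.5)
Your proof is correct, but it takes a different route from the paper.

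The paper argues locally. It takes a basis $B$ of $X$ and some $j\notin X$ with $B\cup\{j\}$ dependent, and notes that the single coordinate basis $\{e_b:b\in B\}\cup\{e_j\}$ of $\phi(B)\oplus\langle e_j\rangle$ has no avoidance transversal of $\mathcal{X}$. One bad basis suffices for dependence, which is the easy direction of the definition. With Lemma~\ref{lem:indep_to_indep} this gives $\rho(\phi(B)\oplus\langle e_j\rangle)=\rho(\phi(B))$. Submodularity, using $\phi(X)\cap\phi(B\cup\{j\})=\phi(B)$, then lifts this equality to $\phi(X)$.

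You instead prove the global identity $\rho(\phi(Y))=r(Y)$ for all $Y\subseteq[n]$. Your upper bound has to control arbitrary, non-coordinate independent subspaces $J\le\phi(Y)$, which is why you need the K\"onig cover $(Y\setminus Y_0,T)$. Your count $\dim J\le |Y\setminus Y_0|+\dim(J\cap\phi(Y_0))\le |Y\setminus Y_0|+|T|=r(Y)$ is sound.

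Your route buys a stronger statement: $\phi(M)$ restricted to coordinate subspaces is exactly $M$. This gives the present lemma and Lemma~\ref{lem:ranks_agree} at once. The paper's route buys brevity, using nothing beyond Lemma~\ref{lem:indep_to_indep} and one application of submodularity.

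K\"onig is in fact avoidable even for your identity. Apply the submodularity argument of Lemma~\ref{lem:lin_basis_basis} to $\phi(M)|\phi(Y)$ with the linear basis $\{e_y:y\in Y\}$. This gives some $B'\subseteq Y$ with $\phi(B')$ a basis of $\phi(Y)$. Independence of $\phi(B')$ forces its coordinate basis, and hence $B'$, to have an avoidance transversal. So $\rho(\phi(Y))=|B'|\le r(Y)$.

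In a write-up, delete the exploratory material: the abandoned row-echelon attempt and the informal Hall sentence before ``Concretely''. That sentence is garbled about which coordinates lie in every $S_i$ with $i\notin T$. Keep only the precise version with $Y_0$ and $T$.
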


\begin{proof}
    Let $B$ be a basis of $X$ in $M$. Since $X$ is not a flat, there exists $j\in[n]\setminus X$ such that $B\cup\{j\}\notin\mathcal{I}(M)$. Thus there is no avoidance transversal of $B\cup\{j\}$ in $\mathcal{S}$. By Lemma~\ref{lem:vector_to_support_containment}, there is therefore no avoidance transversal of
    $\beta=\{e_b:b\in B\}\cup\{e_j\}$
    in $\mathcal{X}$.

    Since $\beta$ is a linear basis of $\phi(B)\oplus\langle e_j\rangle$, it follows that
    \[
        \rho(\phi(B)\oplus\langle e_j\rangle)
        < \dim(\phi(B)\oplus\langle e_j\rangle)
        = \dim(\phi(B))+1.
    \]
    Hence $\rho(\phi(B)\oplus\langle e_j\rangle)=\rho(\phi(B))$. By submodularity, this implies that
    $\rho(\phi(X))=\rho(\phi(X)\oplus\langle e_j\rangle)$,
    and therefore $\phi(X)\notin\mathcal{F}(\phi(M))$.
\end{proof}

The following corollary now follows immediately from the preceding lemmas.

\begin{corollary}\label{cor:CFs_in_image_are_image_of_CFs}
    The cyclic flats of $\phi(M)$ that lie in the image of $\phi$ are precisely $\phi(\mathcal{Z}(M))$.
\end{corollary}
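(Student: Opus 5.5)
We prove two inclusions; both reduce to the preceding lemmas together with the elementary fact that $\phi$ is a bijection from $2^{[n]}$ onto the coordinate subspaces of $\mathbb{F}_q^n$.

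For the inclusion $\phi(\mathcal{Z}(M))\subseteq\mathcal{Z}(\phi(M))$, take $Z\in\mathcal{Z}(M)$. Since $Z$ is a flat of $M$, Lemma~\ref{lem:flats_to_flats} gives $\phi(Z)\in\mathcal{F}(\phi(M))$. Since $Z$ is cyclic in $M$, Lemma~\ref{lem:cycles_to_cycles} gives $\phi(Z)\in\mathcal{C}(\phi(M))$. Hence $\phi(Z)$ is a cyclic flat of $\phi(M)$, and it clearly lies in the image of $\phi$.

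For the reverse inclusion, let $W\in\mathcal{Z}(\phi(M))$ lie in the image of $\phi$, so $W=\phi(X)$ for a unique $X\subseteq[n]$. By the preceding lemma (in contrapositive form), $X$ is a flat of $M$, since otherwise $\phi(X)$ would not be a flat of $\phi(M)$. It remains to show that $X$ is cyclic in $M$, and this is the step with no ready-made lemma; it is the main obstacle. I would argue by contrapositive. If $X$ is not cyclic in $M$, there is $i\in X$ with $r(X\setminus\{i\})<r(X)$, i.e.\ $i$ is a coloop of $M|X$. Let $B'$ be a basis of $X\setminus\{i\}$. Then $B=B'\cup\{i\}$ is a basis of $X$, so $\phi(B)\in\mathcal{I}(\phi(M))$ by Lemma~\ref{lem:indep_to_indep}, giving $\rho(\phi(X))\ge r(X)$.

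I would then take the hyperplane $H=\phi(X\setminus\{i\})$ of $\phi(X)$ and aim to show $\rho(H)=r(X\setminus\{i\})<r(X)\le\rho(\phi(X))$, so that $\phi(X)$ is not cyclic. The upper bound $\rho(\phi(Y))\le r(Y)$ for every $Y\subseteq[n]$ is needed here, and I would get it from coordinate bases. Any independent subspace of $\phi(Y)$ of dimension $d$ contains some linear basis whose vectors have distinct ``leading'' coordinates in $Y$ (use reduced row echelon form). The avoidance transversal of that basis in $\mathcal{X}$, combined with Lemma~\ref{lem:vector_to_support_containment}, transfers to those pivot coordinates: if $b\notin X_j$ then $\sigma(b)\not\subseteq S_j$. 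The difficulty is that this gives only some coordinate of $b$ outside $S_j$, not necessarily the pivot.

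To fix this, I would instead use the fact that every linear basis has an avoidance transversal. Taking the sub-basis $\{e_y\}$ obtained by Gaussian elimination fails, since that is a different subspace. The better route is a Hall/Rado-type argument: the independent subspace $V\le\phi(Y)$ of dimension $d$ has all bases transversal, so in particular its reduced echelon basis $\{b_1,\dots,b_d\}$ does, with $b_t\notin X_{j_t}$. For each $t$ this yields a coordinate $c_t\in\sigma(b_t)\setminus S_{j_t}$. Choosing these coordinates distinct is possible via a generic-basis argument: replace the basis by a suitable invertible recombination and apply Hall's theorem to the bipartite graph between the $X_j$'s and $Y$. This produces $d$ distinct elements of $Y$ with an avoidance transversal of $\mathcal{S}$, so $d\le r(Y)$.

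With $\rho\circ\phi=r$ on all subsets in hand, cyclicity transfers directly, and we conclude $X\in\mathcal{Z}(M)$, completing the corollary.
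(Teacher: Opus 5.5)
Your decomposition is the one the paper intends. Its proof is the single remark that the corollary is immediate from the preceding lemmas. You are right that those lemmas give $\phi(\mathcal{Z}(M))\subseteq\mathcal{Z}(\phi(M))$ and flatness of $X$, but say nothing about cyclicity of $X$ in the reverse inclusion. The paper leaves that case unaddressed, and your contrapositive through a coloop $i$ of $M|X$ is the right way to close it.

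The gap is in the one new ingredient, the bound $\rho(\phi(Y))\le r(Y)$. The ``generic-basis argument'' is asserted, not proved, and if ``generic'' means general position it cannot work. Such a basis has every vector supported on all of $\sigma(V)$, so its avoidance transversal carries less information, not more. For instance, take $S_1=S_2=\{1\}$, $Y=\{1,2\}$ and $\alpha\in\mathbb{F}_q\setminus\{0,1\}$. The basis $\{e_1+e_2,\,e_1+\alpha e_2\}$ of $\phi(Y)$ has an avoidance transversal of $\mathcal{X}$, yet $r(Y)=1$; indeed $\phi(Y)$ is dependent, as the basis $\{e_1,e_2\}$ shows. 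What you must use is that \emph{every} linear basis has a transversal, applied to a basis chosen to expose the obstruction.

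The simplest repair runs as follows.
Let $B$ be a basis of $Y$ in $M$, so $\phi(B)\in\mathcal{I}(\phi(M))$ by Lemma~\ref{lem:indep_to_indep}. Suppose $\phi(B)\oplus\langle v\rangle$ were independent for some $v\in\phi(Y)\setminus\phi(B)$. Subtract a combination of the $e_b$ so that $\varnothing\neq\sigma(v)\subseteq Y\setminus B$, and take the linear basis $\{e_b:b\in B\}\cup\{v\}$. Its avoidance transversal gives distinct indices with $b\notin S_{j_b}$ and $v\notin X_j$. By Lemma~\ref{lem:vector_to_support_containment} some $\ell\in\sigma(v)\setminus S_j$ exists, and whichever $\ell$ it is, it lies in $Y\setminus B$. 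So $B\cup\{\ell\}\subseteq Y$ is independent in $M$, a contradiction. Hence $\phi(B)$ is an inclusion-maximal independent subspace of $\phi(Y)$, and by equidimensionality of bases $\rho(\phi(Y))=r(Y)$ for \emph{every} $Y\subseteq[n]$. This is the argument of Lemma~\ref{lem:ranks_agree}, with the basis chosen so that the transfer to coordinates is valid, and it needs no cyclic-flat hypothesis.

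If you prefer to keep your Hall framing, argue as follows. Suppose the bipartite graph on $[k]\times Y$ with $j\sim y\iff y\notin S_j$ has no matching of size $d=\dim V$. K\"onig's theorem gives $J_0\subseteq[k]$ and $Y_0\subseteq Y$ with $|J_0|+|Y_0|<d$ and $Y\setminus Y_0\subseteq S_j$ for all $j\notin J_0$. Then $W=V\cap\phi(Y\setminus Y_0)$ satisfies $\dim W>|J_0|$ and $W\le X_j$ for $j\notin J_0$. So a basis of $V$ extending one of $W$ has no avoidance transversal.

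With $\rho\circ\phi=r$ in hand, your step $\rho(\phi(X\setminus\{i\}))=r(X)-1<r(X)=\rho(\phi(X))$ goes through and the proof is complete. The same identity is also what Lemma~\ref{lem:flats_to_flats} tacitly needs when it concludes that $v$ raises the rank of $\phi(F)$.
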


In the following lemma, we show that the cyclic flats of $\phi(M)$ are contained in the image of $\phi$ (that is, $\phi(M)$ is coordinate).

\begin{lemma}\label{lem:CFs_of_q-mat_in_image}
    All cyclic flats of $\phi(M)$ lie in the image of $\phi$.
\end{lemma}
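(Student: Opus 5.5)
Our plan is to show that every cyclic flat $Z$ of $\phi(M)$ satisfies $Z=\phi(\sigma(Z))$. The inclusion $Z\leq\phi(\sigma(Z))$ always holds. So it suffices to show that $Z$ contains $e_j$ for every $j\in\sigma(Z)$. The main tool is the symmetry group of the presentation. Every member of $\mathcal{X}$ is coordinate. Hence any invertible diagonal matrix $D$ maps each $X_i$ to itself, and so does any elementary transvection $T$ that adds a multiple of $e_a$ to $e_b$, provided that $b\in S_i$ implies $a\in S_i$ for every $i$. Each such map sends linear bases to linear bases and preserves the relation $v\in X_i$ (Lemma~\ref{lem:vector_to_support_containment}). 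It therefore preserves avoidance transversals, and hence it is an automorphism of $\phi(M)$. In particular, the diagonal torus acts on $\mathcal{Z}(\phi(M))$.

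The first step is to exploit the torus, and over $\mathbb{F}_2$ this gives nothing, so a different argument is needed there. We therefore give a direct argument using flatness. Let $Z$ be a cyclic flat, let $v\in Z$, and let $j\in\sigma(v)$. We want to show $e_j\in Z$. Suppose not. Since $Z$ is a flat, $\rho(Z\oplus\langle e_j\rangle)=\rho(Z)+1$. Take a basis $B$ of $Z$ in $\phi(M)$. Then $B\oplus\langle e_j\rangle$ is independent, so every linear basis of it has an avoidance transversal. Next we use cyclicity. Consider the hyperplane $H$ of $Z$ consisting of the vectors whose $j$-th coordinate is $0$. This is a genuine hyperplane, since $j\in\sigma(Z)$. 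Cyclicity gives $\rho(H)=\rho(Z)$. Now compare $Z$ with $H\oplus\langle e_j\rangle$: both contain $H$, and $H\oplus\langle e_j\rangle\supseteq Z$ because $Z\subseteq H+\langle e_j\rangle$. Flatness then gives
\[
\rho(H\oplus\langle e_j\rangle)=\rho(Z\oplus\langle e_j\rangle)=\rho(Z)+1=\rho(H)+1,
\]
so this configuration is consistent and no contradiction arises yet. The contradiction must come from a support argument. An independent space $I\leq H$ of dimension $\rho(Z)$ must extend by $e_j$ to an independent space. At the same time, $v=h+\lambda e_j$ with $h\in H$ and $\lambda\neq 0$. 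Using Lemma~\ref{lem:vector_to_support_containment}, we aim to show that the avoidance transversal for a basis containing $e_j$ can be rerouted to one containing $v$, and conversely, by Hall-type matching on supports. This is exactly as in the proofs of Lemmas~\ref{lem:flats_to_flats} and \ref{lem:cycles_to_cycles}. Concretely, $v\notin X_i$ whenever $e_j\notin X_i$, so $v$ avoids at least as many members as $e_j$ does.

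A cleaner route is to work through the matroid $M$ instead. Let $T=\sigma(Z)$. We aim to show that $T$ is a cyclic flat of $M$ and that $\rho(Z)=r(T)$. The rank formula for transversal matroids is $r(T)=\min_{J\subseteq[k]}\bigl(|T\setminus\bigcup_{i\in J}S_i|\,\text{-type deficiency}\bigr)$, and the $q$-rank of $\phi(M)$ from \cite{saaltink2025theoryqtransversals} has the analogous form with $\dim(X\cap\ldots)$. Applying the $q$-analogue of this König/Hall rank formula, we expect the following. The rank of $Z$ is governed by the intersections $Z\cap X_i$ for certain $i$, and a flat must then contain $\phi(T)\cap\bigcap_{i\in J}X_i$ together with everything of higher rank, which forces $Z\supseteq\phi(T)$. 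Combined with $Z\leq\phi(T)$, this gives $Z=\phi(T)$.

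The main obstacle is this step, namely showing that a cyclic flat $Z$ already has full rank "deficiency" witnessed by coordinate data. We would first prove $\rho(Z)=\rho(\phi(\sigma(Z)))$. Closure under flats then gives $\phi(\sigma(Z))\leq \mathrm{cl}(Z)=Z$. To get $\rho(\phi(T))\leq\rho(Z)$, we would use that every $v\in Z$ with $\sigma(v)\ni j$ satisfies $v\notin X_i\Rightarrow e_j\notin X_i$ is false in general, so we would instead argue via cyclicity. Every $j\in T$ is "dependent" in $Z$. Projecting via $\pi_{\setminus j}$, as in Lemma~\ref{lem:cycles_to_cycles}, we would show that $e_j$ lies in the closure of $Z$. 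This uses the fact that the transversal rank of a space only depends on the supports of its vectors relative to the coordinate presentation.
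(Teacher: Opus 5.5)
There is a genuine gap. Your reduction in the last paragraph is sound: since $Z\le\phi(\sigma(Z))$ and $Z$ is a flat, it suffices to prove $\rho(\phi(\sigma(Z)))\le\rho(Z)$, i.e.\ $e_j\in\mathrm{cl}(Z)$ for every $j\in\sigma(Z)$. But that inequality is the whole content of the lemma, and none of your three strategies proves it.

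\begin{itemize}
\item The torus idea is dropped before it is used.
\item In the flatness paragraph the key containment is false. $H+\langle e_j\rangle$ and $Z$ both have dimension $\dim Z$, so $Z\le H+\langle e_j\rangle$ would force $Z=H\oplus\langle e_j\rangle\ni e_j$, contradicting your assumption. The subsequent ``rerouting by Hall-type matching'' is never carried out. Its ``conversely'' direction would need $v\notin X_i\Rightarrow e_j\notin X_i$, which you yourself note is false.
\item The ``cleaner route'' writes an ill-formed rank formula and appeals to an unstated $q$-analogue from the $q$-transversal paper. It then ends with ``we expect''.
\item The final appeal to projecting via $\pi_{\setminus j}$ is only a promise. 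The paper pursues that projection idea by induction on $n$: it restricts to the coordinate hyperplane $\langle e_m:m\neq i\rangle$, shows $\pi_{\setminus i}(Z)$ is a cyclic flat of the restriction, and invokes the induction hypothesis. Your proposal sets up none of this.
\end{itemize}

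The missing idea is a precise rank formula combined with a single use of cyclicity.

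\begin{itemize}
\item \emph{Independence criterion.} Apply Hall's theorem to every linear basis $\beta$. For $\beta'\subseteq\beta$, the set of $i$ such that some $b\in\beta'$ lies outside $X_i$ is exactly $\{i:\langle\beta'\rangle\not\le X_i\}$. Hence $X$ is a partial $q$-transversal if and only if $\dim Y\le f(Y):=|\{i:Y\not\le X_i\}|$ for all $Y\le X$.
\item \emph{Rank formula.} The function $f$ is increasing and submodular with $f(0)=0$. Therefore $X\mapsto\min_{Y\le X}(\dim X-\dim Y+f(Y))$ satisfies the $q$-matroid rank axioms and has exactly these independent spaces, so it equals $\rho$.
\item \emph{Cyclicity.} Suppose $Z$ is cyclic and the minimum for $Z$ were attained at some $Y<Z$. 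Then any hyperplane $H$ of $Z$ containing $Y$ would satisfy $\rho(H)\le\rho(Z)-1$, which is impossible. Hence $\rho(Z)=f(Z)$.
\item \emph{Coordinate presentation.} Each $X_i$ is coordinate, so $Z\le X_i$ if and only if $\sigma(Z)\subseteq S_i$, if and only if $\phi(\sigma(Z))\le X_i$. This gives $\rho(\phi(\sigma(Z)))\le f(\phi(\sigma(Z)))=f(Z)=\rho(Z)$, and flatness finishes the proof.
\end{itemize}

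This is the argument your third paragraph was reaching for, and it would give a short, induction-free proof. As submitted, however, the proposal contains only the correct reduction and no proof of the inequality it reduces to.
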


\begin{proof}
    We proceed by induction on $n$. The base case $n\leq 2$ holds since all such matroids and $q$-matroids are respectively transversal and transversal $q$-matroids, and there is a one-to-one correspondence between the respective equivalence classes. Suppose that the statement holds for all ambient dimensions at most $k$.

    Let $n=k+1$ and let $Z\in\mathcal{Z}(\phi(M))$. Suppose that $i\in\sigma(Z)$ but $e_i\notin Z$. Let
    \[
        H=\langle e_1,\dots,e_{i-1},e_{i+1},\dots,e_n\rangle .
    \]
    Since $i\in\sigma(Z)$, we have $Z\not\leq H$, and therefore $H\cap Z\in\textup{Hyp}(Z)$. Moreover, since $i\in\sigma(Z)$ but $e_i\notin Z$, the map $\pi_{\setminus i}$ is injective on $Z$. Also observe that $\pi_{\setminus i}$ is the identity on $H$.

    We now show that $\pi_{\setminus i}(Z)$ is a cyclic flat of the restricted $q$-matroid $\phi(M)|H$. This will allow us to apply the induction hypothesis to $\pi_{\setminus i}(Z)$.

    First we show that $\pi_{\setminus i}(Z)$ is a flat in $\phi(M)|H$. Since $Z$ is cyclic, every hyperplane of $Z$ contains a basis of $Z$. Hence there exists a basis $B$ of $Z$ such that $B\leq H$. Since $Z$ is a flat, we have $B\oplus\langle v\rangle\in\mathcal{I}(\phi(M))$ for all $v\notin Z$. For $v\in H\setminus Z$, we have $i\notin\sigma(B)$ and $i\notin\sigma(v)$, and therefore $i\notin\sigma(B\oplus\langle v\rangle)$. By Lemma~\ref{lem:vector_to_support_containment}, it follows that $B\oplus\langle v\rangle$ is a partial $q$-transversal of $\mathcal{X}_{\setminus i}$. Hence for every $v\in H\setminus\pi_{\setminus i}(Z)$, the rank of $\pi_{\setminus i}(Z)\oplus\langle v\rangle$ in $\phi(M)|H$ strictly exceeds that of $\pi_{\setminus i}(Z)$. Thus $\pi_{\setminus i}(Z)$ is a flat in $\phi(M)|H$.

    Next we show that $\pi_{\setminus i}(Z)$ is cyclic in $\phi(M)|H$. Let $B'$ be an arbitrary basis of $Z$. Since $Z$ is a flat and $e_i\notin Z$, we have $B'\oplus\langle e_i\rangle\in\mathcal{I}(\phi(M))$. Hence every linear basis of $B'\oplus\langle e_i\rangle$ has an avoidance transversal in $\mathcal{X}$. In particular, every such basis of the form $\beta\cup\{e_i\}$ with $i\notin\sigma(\beta)$ has an avoidance transversal in $\mathcal{X}$. By Lemma~\ref{lem:vector_to_support_containment}, we deduce that $\pi_{\setminus i}(\beta)$ has an avoidance transversal in $\mathcal{X}_{\setminus i}$. It follows that $\pi_{\setminus i}(B')$ is a partial $q$-transversal of $\mathcal{X}_{\setminus i}$.

    Therefore for any basis $B'$ of $Z$, the subspace $\pi_{\setminus i}(B')$ is a basis of $\pi_{\setminus i}(Z)$ in $\phi(M)|H$. Since $\pi_{\setminus i}$ is injective on $Z$ and is the identity on $H\cap Z$, it follows that for each $G\in\textup{Hyp}(\pi_{\setminus i}(Z))$ there exists a basis of $\pi_{\setminus i}(Z)$ contained in $G$. Hence $\pi_{\setminus i}(Z)$ is cyclic in $\phi(M)|H$. Since we already showed that $\pi_{\setminus i}(Z)$ is a flat, it follows that $\pi_{\setminus i}(Z)$ is a cyclic flat of $\phi(M)|H$.

    By the induction hypothesis, $\pi_{\setminus i}(Z)$ has a linear basis $\{e_j:j\in S\}$ for some $S\subseteq[n]\setminus\{i\}$. Since $\pi_{\setminus i}$ is the identity on $H$, it follows that $H\cap Z$ has a linear basis $\beta=\{e_j:j\in S\setminus\{\ell\}\}$ for some $\ell\in S$. Consequently $Z$ has a linear basis of the form $\beta\cup\{e_\ell+\lambda e_i\}$ for some $\lambda\in\mathbb{F}_q$.

    Since $Z$ is a flat, submodularity of $\rho$ implies that for all $v\notin Z$,
    \[
        \rho((H\cap Z)\oplus\langle v\rangle)-\rho(H\cap Z)
        \geq
        \rho(Z\oplus\langle v\rangle)-\rho(Z)
        =1 .
    \]
    If $\lambda\neq 0$, then this implies that
    \[
        \rho((H\cap Z)\oplus\langle e_k\rangle)>\rho(H\cap Z)
    \]
    for all $k\notin S\setminus\{\ell\}$. Therefore $H\cap Z$ is the image of a flat in $M$. By Lemma~\ref{lem:flats_to_flats}, we conclude that $H\cap Z$ is a flat in $\phi(M)$.

    However this implies the contradiction
    \[
        \rho(H\cap Z)<\rho(Z)=\rho(H\cap Z),
    \]
    since $Z$ is cyclic. Hence $\lambda=0$, which implies that $i\notin\sigma(Z)$, contradicting the original assumption.

    We therefore conclude that if $e_i\notin Z$, then $i\notin\sigma(Z)$. The result follows.
\end{proof}

Combining the above lemmas, we have shown that $\phi(\mathcal{Z}(M))=\mathcal{Z}(\phi(M))$. We now show that $\rho(\phi(Z))=r(Z)$ for each $Z\in\mathcal{Z}(M)$. We will then be ready to prove the main theorem of this paper.

\begin{lemma}\label{lem:ranks_agree}
    For all $Z\in\mathcal{Z}(M)$, we have $\rho(\phi(Z))=r(Z)$.
\end{lemma}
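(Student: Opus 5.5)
We prove the two inequalities $\rho(\phi(Z))\geq r(Z)$ and $\rho(\phi(Z))\leq r(Z)$ separately. In fact both hold for every $Z\subseteq[n]$, not only for cyclic flats, and we prove them in that generality.

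\textbf{Lower bound.} Let $B$ be a basis of $Z$ in $M$, so $|B|=r(Z)$. By Lemma~\ref{lem:indep_to_indep}, $\phi(B)\in\mathcal{I}(\phi(M))$. Since $\phi(B)\leq\phi(Z)$ and $\dim\phi(B)=|B|$, monotonicity of $\rho$ gives $\rho(\phi(Z))\geq r(Z)$.

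\textbf{Upper bound.} Let $J$ be a basis of $\phi(Z)$ in $\phi(M)$, and put $d=\dim J=\rho(\phi(Z))$. We need to produce $d$ elements of $Z$ that form an independent set of $M$.

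Choose a linear basis $\beta$ of $J$ in reduced row echelon form with respect to the coordinates of $Z$. Then there are distinct pivot indices $p_1,\dots,p_d\in Z$ with $p_t\in\sigma(b_t)$ and $p_t\notin\sigma(b_s)$ for $s\neq t$. Since $J$ is a partial $q$-transversal of $\mathcal{X}$, the basis $\beta$ has an avoidance transversal: there are distinct indices $i_1,\dots,i_d\in[k]$ with $b_t\notin X_{i_t}$. By Lemma~\ref{lem:vector_to_support_containment}, this means $\sigma(b_t)\not\subseteq S_{i_t}$. We want to turn this into an avoidance transversal of a $d$-element subset of $Z$.

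The naive choice, picking for each $t$ some $j_t\in\sigma(b_t)\setminus S_{i_t}$, may repeat coordinates. To fix this, we exploit the fact that \emph{every} linear basis of $J$ has an avoidance transversal, not just the echelon one, and argue via Hall's theorem. Consider the bipartite graph between coordinates $j\in Z$ and sets $S_i$, with an edge when $j\notin S_i$. Suppose there is no matching of size $d$ from $Z$ into $\mathcal{S}$. By the deficiency (K\H{o}nig--Ore) form of Hall's theorem, there exist $A\subseteq Z$ and $I'\subseteq[k]$ such that every edge leaving $A$ lands in $I'$, i.e. $A\subseteq S_i$ for all $i\notin I'$, and $|Z\setminus A|+|I'|<d$.

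Now use this obstruction against $J$. Let $J_A=J\cap\phi(A)$. We have $\dim J_A\geq d-|Z\setminus A|>|I'|$. Every vector of $J_A$ has support in $A$, so by Lemma~\ref{lem:vector_to_support_containment} it lies in $X_i$ for every $i\notin I'$. Hence in any avoidance transversal of a basis of $J$ extending a basis of $J_A$, the vectors of $J_A$ must all be assigned to sets indexed by $I'$. There are more than $|I'|$ such vectors, which is a contradiction. Therefore a matching of size $d$ exists. Its matched coordinates give an independent set of $M$ of size $d$ inside $Z$, so $r(Z)\geq d$.

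The main obstacle is this Hall/deficiency step: choosing the correct obstruction pair $(A,I')$ and checking that the dimension count $\dim J_A\geq d-|Z\setminus A|$ really beats $|I'|$. The cleanest route is to apply the defect version of Hall's theorem directly to the family $\{Z\setminus S_i : i\in[k]\}$ and pull the resulting obstruction back through Lemma~\ref{lem:vector_to_support_containment}, as above.
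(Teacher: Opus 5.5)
Your proof is correct, and for the upper bound it takes a genuinely different route from the paper. The lower bound, via Lemma~\ref{lem:indep_to_indep} applied to a basis $B$ of $Z$, is the same first step the paper uses.

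The paper then shows that $\phi(B)$ is itself a basis of $\phi(Z)$ by a one-vector augmentation argument. If $\phi(B)\oplus\langle v\rangle$ were independent for some $v\in\phi(Z)\setminus\phi(B)$, the avoidance transversal of a linear basis containing $v$ would give a coordinate $\ell\in Z\setminus B$ with $B\cup\{\ell\}\in\mathcal{I}(M)$. This contradicts the maximality of $B$. The argument is short and needs only the definitions and the equidimensionality of bases. It does need some care, though. The transversal only certifies the coordinate witnessing $v\notin X_{i_0}$, where $i_0$ is the index assigned to $v$. So one should first reduce $v$ modulo $\phi(B)$, so that $\sigma(v)\subseteq Z\setminus B$, and then take $\ell$ to be that witness.

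You instead take an arbitrary basis $J$ of $\phi(Z)$. You use the K\H{o}nig/Ore deficiency theorem to turn the absence of a size-$d$ matching into an obstruction $(A,I')$. You then pull this back to $J\cap\phi(A)$, a subspace of dimension at least $d-|Z\setminus A|>|I'|$ that lies in every $X_i$ with $i\notin I'$, so it cannot be independent. This uses heavier machinery, but it gives a clean min--max certificate. It never requires choosing a special basis or coordinate, and it bounds $\dim J$ for every independent $J\le\phi(Z)$ directly, rather than through the maximality of one particular independent subspace.

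As you note, neither proof uses that $Z$ is a cyclic flat, so $\rho(\phi(Z))=r(Z)$ holds for all $Z\subseteq[n]$. Your opening discussion of reduced row echelon form plays no role in the final argument and can be dropped.
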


\begin{proof}
    Let $Z\in\mathcal{Z}(M)$ and let $B$ be a basis of $Z$ in $M$. By Lemma~\ref{lem:indep_to_indep}, $\phi(B)\in\mathcal{I}(\phi(M))$. We claim that $\phi(B)$ is a basis of $\phi(Z)$ in $\phi(M)$.

    Suppose that $\phi(B)\oplus\langle v\rangle\in\mathcal{I}(\phi(M))$ for some $v\in \phi(Z)\setminus \phi(B)$. Let $\beta$ be a linear basis of $\phi(B)\oplus\langle v\rangle$ containing $v$. Since $v\notin\phi(B)$, there exists $\ell\in\sigma(v)\setminus B$. As $v\in\phi(Z)$, we have $\ell\in Z$.

    Since $\phi(B)\oplus\langle v\rangle$ is independent, the basis $\beta$ has an avoidance transversal in $\mathcal{X}$. By Lemma~\ref{lem:vector_to_support_containment}, this implies that the set $\{e_i:i\in B\}\cup\{e_\ell\}$ has an avoidance transversal in $\mathcal{X}$. It follows that $B\cup\{\ell\}$ has an avoidance transversal in $\mathcal{S}$, and hence $B\cup\{\ell\}\in\mathcal{I}(M)$. This contradicts the fact that $B$ is a basis of $Z$.

    Therefore $\phi(B)$ is a basis of $\phi(Z)$ in $\phi(M)$, and we conclude that
    $\rho(\phi(Z))=\dim(\phi(B))=|B|=r(Z)$.
\end{proof}

The following theorem is the main result of this paper. For its proof, we no longer assume the previously fixed notation $M=([n],r)$ and $\phi(M)=(\mathbb{F}_q^n,\rho)$.

\begin{theorem}[The cyclic flat embedding theorem for transversal matroids]\label{thm:cyc_flat_embedding}
    If the corresponding matroid of a coordinate $q$-matroid is transversal, then the $q$-matroid is transversal and representable.
\end{theorem}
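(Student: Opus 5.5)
Let $N$ be a coordinate $q$-matroid on $\mathbb{F}_q^n$ with weighted cyclic flats $(\mathcal{Z},\lambda)$. Suppose its corresponding matroid $M$, with weighted cyclic flats $(\phi^{-1}(\mathcal{Z}),\lambda\circ\phi)$, is transversal. The plan is to show that $N$ coincides with the transversal $q$-matroid $\phi(M)$ built from an avoidance presentation of $M$. We then read off representability from Theorem~\ref{thm:saaltink_representable}.

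Since transversal matroids coincide with avoidance transversal matroids (take complements of the sets in a transversal presentation), $M$ has an avoidance presentation $\mathcal{S}=(S_1,\dots,S_k)$ on $[n]$. Set $X_i=\phi(S_i)$ and let $\phi(M)=(\mathbb{F}_q^n,\rho)$ be the transversal $q$-matroid with presentation $\mathcal{X}=(X_1,\dots,X_k)$. We are now exactly in the fixed notation of this section, so all the preceding lemmas apply.

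Corollary~\ref{cor:CFs_in_image_are_image_of_CFs} identifies the cyclic flats of $\phi(M)$ lying in the image of $\phi$ with $\phi(\mathcal{Z}(M))$. Lemma~\ref{lem:CFs_of_q-mat_in_image} shows every cyclic flat of $\phi(M)$ lies in that image. Together these give $\mathcal{Z}(\phi(M))=\phi(\mathcal{Z}(M))=\phi(\phi^{-1}(\mathcal{Z}))$. This equals $\mathcal{Z}$, because $N$ is coordinate, so each member of $\mathcal{Z}$ lies in the image of $\phi$ and $\phi$ is injective. Lemma~\ref{lem:ranks_agree} gives $\rho(\phi(Z))=r(Z)=\lambda(\phi(Z))$ for every $Z\in\mathcal{Z}(M)$, so the rank functions of $N$ and $\phi(M)$ agree on $\mathcal{Z}$.

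A $q$-matroid is determined by its weighted lattice of cyclic flats \cite{AlfaranoByrne2024CyclicFlats}, so $N=\phi(M)$. In particular $N$ is a transversal $q$-matroid with presentation $\mathcal{X}$. Every $X_i=\langle e_j:j\in S_i\rangle$ is spanned by a subset of the standard basis. Hence Theorem~\ref{thm:saaltink_representable}, applied with $\beta$ the standard basis, shows $N$ is representable.

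The substantive difficulty lies entirely in the earlier lemmas, above all Lemma~\ref{lem:CFs_of_q-mat_in_image}. Within this proof, the only points needing care are the bookkeeping that $\phi\circ\phi^{-1}$ is the identity on coordinate subspaces, and that the rank weights match as $\lambda=r\circ\phi^{-1}$ on $\mathcal{Z}$.
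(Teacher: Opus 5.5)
Your proposal is correct and follows the paper's proof essentially step for step. You pass to an avoidance presentation of the corresponding matroid and build $\phi(M)$ from the coordinate presentation $(\phi(S_i))$. You then combine Corollary~\ref{cor:CFs_in_image_are_image_of_CFs}, Lemma~\ref{lem:CFs_of_q-mat_in_image} and Lemma~\ref{lem:ranks_agree} to match the weighted lattices of cyclic flats, conclude equality by uniqueness of that lattice, and finish with Theorem~\ref{thm:saaltink_representable}. Your explicit bookkeeping (complementation, $\phi\circ\phi^{-1}$ being the identity on coordinate subspaces, and $\lambda=r\circ\phi^{-1}$ on $\mathcal{Z}$) only spells out what the paper leaves implicit.
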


\begin{proof}
    Let $M_q$ be a coordinate $q$-matroid whose corresponding matroid is $M=([n],r)$. Suppose that $M$ is avoidance transversal with presentation $\mathcal{S}=(S_1,\dots,S_k)$. Let $\phi(M)=(\mathbb{F}_q^n,\rho)$ be the transversal $q$-matroid with presentation $\mathcal{X}=(X_1,\dots,X_k)$, where $X_i=\phi(S_i)$ for each $i\in[k]$.

    By Corollary~\ref{cor:CFs_in_image_are_image_of_CFs}, Lemma~\ref{lem:CFs_of_q-mat_in_image}, and Lemma~\ref{lem:ranks_agree}, we obtain that $\mathcal{Z}(\phi(M))=\phi(\mathcal{Z}(M))$ and that $r(Z)=\rho(\phi(Z))$ for all $Z\in\mathcal{Z}(M)$.

    By the definition of a corresponding matroid, we have $\mathcal{Z}(M_q)=\phi(\mathcal{Z}(M))$, and for each $Z\in\mathcal{Z}(M_q)$ its rank in $M_q$ is $r(\phi^{-1}(Z))$. Observe that
    \[
        r(\phi^{-1}(Z))
        =
        \rho(\phi(\phi^{-1}(Z)))
        =
        \rho(Z).
    \]

    It follows that $\mathcal{Z}(M_q)=\mathcal{Z}(\phi(M))$ and that the ranks agree on these cyclic flats. By \cite[Theorem 4.11]{AlfaranoByrne2024CyclicFlats}, the $q$-matroids $M_q$ and $\phi(M)$ are equal. By \cite[Theorem 24]{saaltink2025theoryqtransversals}, $\phi(M)$ is a representable transversal $q$-matroid. The result follows.
\end{proof}

Theorem~\ref{thm:cyc_flat_embedding} allows us to apply tools from transversal matroid theory to representable $q$-matroid theory from the perspective of cyclic flats. We highlight one instance of this in the following corollary, in which we identify a class of representable $q$-matroids not previously known to be representable.

\begin{corollary}
    All nested $q$-matroids are transversal and representable.
\end{corollary}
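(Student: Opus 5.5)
The plan is to reduce the statement to Theorem~\ref{thm:cyc_flat_embedding}. We need two facts about a nested $q$-matroid $N$: that it is a coordinate $q$-matroid (possibly after a change of basis), and that its corresponding matroid is a nested matroid. Since nested matroids are transversal, the theorem then finishes the proof.

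First, fix the definition. A nested $q$-matroid is one whose cyclic flats form a chain $Z_0<Z_1<\cdots<Z_m$ in $\mathcal{L}(E)$, with ranks $\lambda(Z_0)<\cdots<\lambda(Z_m)$. Because any finite chain of subspaces admits an adapted linear basis, there is a basis $\beta$ of $E$ such that each $Z_j$ is spanned by a subset of $\beta$. The two $q$-matroid properties we need, transversality and representability, are invariant under the $GL$-isomorphism sending $\beta$ to the standard basis. Hence we may assume each $Z_j=\phi(T_j)$ with $T_0\subsetneq T_1\subsetneq\cdots\subsetneq T_m\subseteq[n]$, so that $N$ is coordinate.

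Second, by Lemma~\ref{lem:corresponding_matroid} the corresponding matroid $M$ of $N$ has weighted cyclic flats given by the chain $(T_j)$ with the same ranks. So $M$ is a nested matroid, in the classical sense of a matroid whose cyclic flats form a chain.

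Third, we show directly that nested matroids are transversal, since the paper has not recorded this. The classical route is that nested matroids are exactly the lattice path (generalized Catalan) matroids, and these are transversal. Alternatively, we give an explicit avoidance presentation. Set $d_j=\lambda(T_j)-\lambda(T_{j-1})$, with $d_0=\lambda(T_0)$; this is $0$ since $T_0$ is the closure of the empty set and is cyclic. Take the presentation $\mathcal{S}$ consisting of $d_j$ copies of the set $T_{j-1}$ for each $j=1,\dots,m$, together with $r(M)-\lambda(T_m)$ copies of $T_m$. We then check via Hall's theorem that this avoidance transversal matroid has rank function $r(X)=\min_j\big(\lambda(T_j)+|X\setminus T_j|\big)$, which is the standard rank formula for a matroid determined by its cyclic flats. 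The set $X$ can avoid the copies indexed by $T_{j-1}$ only through its elements outside $T_{j-1}$, and that gives exactly these bounds.

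The main obstacle is this verification. Checking that the Hall deficiency of this presentation reproduces $\min_j(\lambda(T_j)+|X\setminus T_j|)$, including the loops in $T_0$, is the only real computation. If it becomes messy, we instead cite the known equality of nested matroids with lattice path matroids, which are transversal. Once transversality of $M$ is established, Theorem~\ref{thm:cyc_flat_embedding} gives that $N$ is transversal and representable.
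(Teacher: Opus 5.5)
Your proposal is correct and follows the paper's proof: choose a basis adapted to the chain of cyclic flats, change basis so the $q$-matroid is coordinate, note that its corresponding matroid is nested, and apply Theorem~\ref{thm:cyc_flat_embedding}. Your explicit chain presentation is a correct, self-contained replacement for the paper's bare citation that nested matroids are transversal; by K\H{o}nig's theorem it does give rank $\min_j(\lambda(T_j)+|X\setminus T_j|)$. One small slip: nested matroids are the generalized Catalan matroids, which form a proper subclass of the lattice path matroids rather than all of them, though this does not affect your conclusion.
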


\begin{proof}
    Let $M_q=(\mathbb{F}_q^n,\rho)$ be a nested $q$-matroid. This means that its lattice of cyclic flats is a chain. Therefore there exists a linear basis $\beta$ of $\mathbb{F}_q^n$ such that each cyclic flat is the span of a subset of $\beta$. Hence, up to a change of basis (and thus $q$-matroid equivalence), we may assume that $\beta$ is the set of standard basis vectors of $\mathbb{F}_q^n$. It follows that $M_q$ is coordinate and that its corresponding matroid is a nested matroid, which is known to be transversal. By Theorem~\ref{thm:cyc_flat_embedding}, it follows that $M_q$ is transversal and representable.
\end{proof}

A benefit of working with cyclic flats is that they form a structural feature shared by both matroids and $q$-matroids. This naturally raises the question of whether similar embedding results hold for other classes of matroids. That is, if a matroid $M$ has a property $P$, does it follow that its corresponding $q$-matroid also has property $P$? The following conjecture provides a plausible example.

\begin{conjecture}
    If the corresponding matroid of a coordinate $q$-matroid is representable, then the $q$-matroid is representable.
\end{conjecture}

\section{Closure of \texorpdfstring{$q$}{q}-transversal matroids under binary operations}

In this section we investigate the behaviour of transversal $q$-matroids under two fundamental binary operations: the direct sum and the free product. Our aim is to show that the class of transversal $q$-matroids is closed under these operations.

As shown in \cite{AlfaranoByrneFulcher2025} and \cite{GluesingLuerssenJany2024}, the structures of the cyclic flats of the free product and direct sum of $q$-matroids behave in a manner analogous to the matroid setting. In particular, for two matroids $M_1$ and $M_2$ we have
\[
\mathcal{Z}(\phi(M_1\oplus M_2))=\mathcal{Z}(\phi(M_1)\oplus\phi(M_2))
\quad\text{and}\quad
\mathcal{Z}(\phi(M_1\sq M_2))=\mathcal{Z}(\phi(M_1)\sq\phi(M_2)).
\]

Consequently, if every transversal $q$-matroid were coordinate up to equivalence, then by Theorem~\ref{thm:cyc_flat_embedding} the closure of transversal $q$-matroids under these binary operations would follow directly from the corresponding closure of transversal matroids. We therefore begin by verifying that this is not the case.

The following example shows that there exist transversal $q$-matroids that are not coordinate up to equivalence. It is the smallest such example that we were able to find.

\begin{example}
Let $M=(\mathbb{F}_q^6,\rho)$ be the transversal $q$-matroid with presentation
\[
\mathcal{X}=(X_1,X_2,X_3)
=
(\langle e_1,e_2,e_3\rangle,\langle e_4,e_5,e_6\rangle,\langle e_1+e_4,e_2+e_5,e_3+e_6\rangle).
\]

We show that $X_1$ is cyclic in $M$. Since $\dim(X_1)=3$ and $X_1$ intersects both $X_2$ and $X_3$ trivially, each hyperplane of $X_1$ is a partial $q$-transversal. Moreover, $X_1$ itself is not a transversal of $\mathcal{X}$. Therefore $X_1$ is cyclic.

By the same argument, $X_2$ and $X_3$ are also cyclic. By \cite[Theorem 21]{saaltink2025theoryqtransversals}, it follows that $\{X_1,X_2,X_3\}\subseteq\mathcal{Z}(M)$, and hence $M$ is not equivalent to a coordinate $q$-matroid.
\end{example}

We begin with the following fundamental lemma.

\begin{lemma}\label{lem:lin_basis_basis}
Let $M=(E,\rho)$ be a $q$-matroid. Any linear basis of $E$ contains a linear basis of a basis of $M$.
\end{lemma}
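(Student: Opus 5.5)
Let $\beta$ be a linear basis of $E$. The statement asks for a subset $\beta'\subseteq\beta$ whose span $\langle\beta'\rangle$ is a basis of $M$, meaning an independent space of dimension $\rho(E)$. The natural approach is a greedy construction along the vectors of $\beta$, driven by the rank axioms of a $q$-matroid: boundedness, monotonicity, and the unit increase property $\rho(X\oplus\langle v\rangle)\le\rho(X)+1$. Submodularity is also available.

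Order $\beta=\{b_1,\dots,b_n\}$ and set $V_0=0$. For each $j\in[n]$, define
\[
V_j = V_{j-1}\oplus\langle b_j\rangle \text{ if } \rho(V_{j-1}\oplus\langle b_j\rangle)=\rho(V_{j-1})+1, \qquad V_j=V_{j-1} \text{ otherwise}.
\]
Let $\beta'$ be the set of vectors $b_j$ that were added, so $V_n=\langle\beta'\rangle$. By induction on $j$, we have $\rho(V_j)=\dim(V_j)$, so each $V_j$ is independent. In particular $\langle\beta'\rangle\in\mathcal{I}(M)$ and $\rho(\langle\beta'\rangle)=|\beta'|$.

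It remains to show that $\rho(V_n)=\rho(E)$, since an independent space whose dimension equals $\rho(E)$ is a basis of $E$ in $M$. This is the main step. It suffices to prove $\rho(V_n+\langle b_1,\dots,b_j\rangle)=\rho(V_n)$ for every $j$, by induction on $j$, because for $j=n$ the space $V_n+\langle b_1,\dots,b_n\rangle$ is all of $E$.

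For the inductive step, suppose $b_j$ was rejected. Then $\rho(V_{j-1}+\langle b_j\rangle)=\rho(V_{j-1})$. Submodularity applied to $V_{j-1}+\langle b_j\rangle$ and any $W\supseteq V_{j-1}$ gives $\rho(W+\langle b_j\rangle)\le\rho(W)$. This is the standard fact that if adding a vector does not raise rank on a space, it does not raise rank on any superspace. Apply it with
\[
W=V_n+\langle b_1,\dots,b_{j-1}\rangle,
\]
which contains $V_{j-1}$ because $V_{j-1}\le V_n$, and use the induction hypothesis to get $\rho(V_n+\langle b_1,\dots,b_j\rangle)=\rho(V_n)$. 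If instead $b_j$ was accepted, then $b_j\in V_n$ and the sum does not change.

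To spell out the submodularity step: take $A=V_{j-1}+\langle b_j\rangle$ and $B=W$. Then
\[
\rho(A+B)+\rho(A\cap B)\le\rho(A)+\rho(B).
\]
Here $A\cap B\supseteq V_{j-1}$, so monotonicity gives $\rho(A\cap B)\ge\rho(V_{j-1})=\rho(A)$, and hence $\rho(W+\langle b_j\rangle)\le\rho(W)$.

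Combining the two parts, $\langle\beta'\rangle$ is an independent space with $\rho(\langle\beta'\rangle)=\rho(E)$, so it is a basis of $M$, and its linear basis $\beta'$ is contained in $\beta$.
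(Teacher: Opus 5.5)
Your proof is correct and follows essentially the same route as the paper. The paper takes an independent space $B$ spanned by a maximal subset of $\beta$, observes that no remaining $b_i$ raises $\rho(B)$, and invokes submodularity to conclude $\rho(B)=\rho(E)$; your greedy construction produces such a space, and your induction simply writes out that submodularity step in full.
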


\begin{proof}
Let $\beta=\{b_1,\dots,b_n\}$ be a linear basis of $E$, and suppose that $B=\langle e_1,\dots,e_k\rangle$ is a maximally independent space in $M$ spanned by elements of $\beta$. Then for each $i\in[n]\setminus[k]$ we have
\[
\rho(B)=\rho(B\oplus\langle b_i\rangle).
\]
By submodularity it follows that $\rho(B)=\rho(E)$. Hence $B$ is a basis of $M$, and $\{b_1,\dots,b_k\}$ is a linear basis of $B$ contained in $\beta$.
\end{proof}

For the remainder of this section we fix the following notation.

\begin{notation}
Let $M_1=(E_1,\rho_1)$ and $M_2=(E_2,\rho_2)$ be transversal $q$-matroids with respective presentations
\[
\mathcal{X}_1=(A_1,\dots,A_k)
\quad\text{and}\quad
\mathcal{X}_2=(B_1,\dots,B_\ell).
\]

Define
\[
\mathcal{X}_1\sq\mathcal{X}_2
=
(A_1\oplus 0,\dots,A_k\oplus 0,E_1\oplus B_1,\dots,E_1\oplus B_\ell)
\]
and
\[
\mathcal{X}_1\oplus\mathcal{X}_2
=
(A_1\oplus E_2,\dots,A_k\oplus E_2,E_1\oplus B_1,\dots,E_1\oplus B_\ell).
\]
\end{notation}

We now establish that the class of transversal $q$-matroids is closed under free products.

\begin{theorem}\label{thm:free_product_transversal}
The $q$-matroid $M_1\sq M_2$ is a transversal $q$-matroid with presentation $\mathcal{X}_1\sq\mathcal{X}_2$.
\end{theorem}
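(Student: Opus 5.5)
We show that the independent spaces of the transversal $q$-matroid $T$ with presentation $\mathcal{X}_1\sq\mathcal{X}_2$ coincide with those of $M_1\sq M_2$ as given in Definition~\ref{def:free_product}. For $X\leq E_1\oplus E_2$ we write $X_1=\pi_1(X\cap(E_1\oplus 0))$ and $P=\pi_2(X)$. Then $X\cap(E_1\oplus0)=X_1\oplus 0$, and $\dim X=\dim X_1+\dim P$. The two inclusions are proved separately, and throughout we use the avoidance condition basis by basis.

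\emph{Step 1: $\mathcal{I}(T)\subseteq\mathcal{I}(M_1\sq M_2)$.} Let $X$ be a partial $q$-transversal of $\mathcal{X}_1\sq\mathcal{X}_2$.

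First take any linear basis $\gamma$ of $X_1\oplus 0$. Every vector of $\gamma$ lies in every $E_1\oplus B_j$, so in an avoidance transversal of $\gamma$ each vector must be matched to some $A_i\oplus 0$. Hence $\pi_1(\gamma)$ has an avoidance transversal of $\mathcal{X}_1$, and so $X_1\in\mathcal{I}(M_1)$.

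For the rank inequality, write $d=\dim P-\rho_2(P)$ and fix a basis $I$ of $P$ in $M_2$. By Lemma~\ref{lem:lin_basis_basis} applied to $M_2|P$, we may choose a linear basis $\delta$ of $P$ that extends a linear basis of $I$. Lift $\delta$ to vectors of $X$, and let $\gamma$ be a linear basis of $X_1\oplus0$ obtained from a basis $J$ of $X_1$ in $M_1$ extended to a basis of $E_1$ where needed. The union $\beta$ is a linear basis of $X$, and we look at its avoidance transversal.

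The key point is that the $d$ lifted vectors whose projections lie outside the independent part cannot all be matched into the $E_1\oplus B_j$ slots. Otherwise the projection of the lifted basis would be a basis of $P$ with an avoidance transversal of $\mathcal{X}_2$. Making this precise is the subtle step. I would choose the lifts adversarially: for $v$ in a non-basis part, replace its $E_1$-component using elements of $X_1$ so that it is forced into $E_1\oplus B_j$ or into $A_i\oplus 0$. So at least $d$ vectors of $\beta$ use $A_i\oplus0$ slots, and since they have nonzero $E_2$-component they avoid those slots automatically. Together with the $\dim X_1$ vectors of $\gamma$, this uses $\dim X_1+d\le k$ slots. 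The bound must be sharpened to $\rho_1(E_1)-\rho_1(X_1)\ge d$, rather than $k-\dim X_1\ge d$. For this I would extend $X_1$ to a maximal independent subspace of $E_1$ spanned alongside, and use the fact that the transversal $q$-matroid $M_1$ has rank $\rho_1(E_1)$: at most $\rho_1(E_1)$ of the $A_i$-slots can be simultaneously avoided by vectors together with a full basis. I expect this counting step to be the main obstacle. It mirrors the classical proof that free products of transversal matroids are transversal, where one exploits that a transversal matroid of rank $r$ has a presentation with exactly $r$ sets. So I would first reduce to presentations with $k=\rho_1(E_1)$, by showing that any $A_i$ containing a coloop-free cover can be dropped without changing $M_1$; this may need a result from \cite{saaltink2025theoryqtransversals}.

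\emph{Step 2: $\mathcal{I}(M_1\sq M_2)\subseteq\mathcal{I}(T)$.} Let $X$ satisfy both conditions of Definition~\ref{def:free_product}, and let $\beta$ be an arbitrary linear basis of $X$. Using row reduction we may assume $\beta=\gamma\cup\delta'$, with $\gamma$ a basis of $X_1\oplus0$ and $\pi_2(\delta')$ a basis of $P$. The avoidance property is preserved under this change of basis, by a Hall-type argument as in the proofs of Lemma~\ref{lem:indep_to_indep} and Lemma~\ref{lem:cycles_to_cycles}.

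The matching is then built as follows. Match $\gamma$ into the $A_i\oplus0$ slots using the independence of $X_1$; by the reduction $k=\rho_1(E_1)$, this leaves at least $\rho_1(E_1)-\dim X_1$ free slots, and there are at least $d$ of them. Match a subfamily of $\pi_2(\delta')$ spanning an independent space of $M_2$ into the $B_j$-slots. The remaining at most $d$ vectors of $\delta'$ have nonzero $E_2$-part, so they avoid the free $A_i\oplus0$ slots. Verifying via Hall's condition that this assignment works for every basis $\beta$, and not only for adapted ones, completes the proof.
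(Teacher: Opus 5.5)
Your proposal has genuine gaps. You rightly notice that slot-counting yields $k-\dim(X\cap(E_1\oplus0))$, whereas Definition~\ref{def:free_product} involves $\rho_1(E_1)-\rho_1(X_1)$. Your remedy cannot work, though. The statement concerns the particular family $\mathcal{X}_1\sq\mathcal{X}_2$ built from the given $\mathcal{X}_1$, so passing to another presentation of $M_1$ proves a different statement. The existence of a presentation with $\rho_1(E_1)$ members is also left unproved.

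Nor can the bound be sharpened: a vector with nonzero $E_2$-component avoids every $A_i\oplus0$, so all $k$ of these slots are available to it. In fact the statement fails when $k>\rho_1(E_1)$. Take $E_1=E_2=\mathbb{F}_q$, $\mathcal{X}_1=(0,0)$ and $\mathcal{X}_2=(E_2)$. Then $M_1=U_{1,1}$ and $M_2=U_{0,1}$, so $M_1\sq M_2$ has rank $1$. Yet every basis of $E_1\oplus E_2$ avoids the two zero members of $\mathcal{X}_1\sq\mathcal{X}_2=(0,0,E_1\oplus E_2)$. The correct repair is to assume $k=\rho_1(E_1)$. The paper's proof does so tacitly when it identifies its criterion $k-\dim(X\cap(E_1\oplus0))\ge\dim\pi_2(X)-\rho_2(\pi_2(X))$ with the free-product condition.

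Step 2 also rests on an invalid reduction. Avoidance transversals are not preserved under change of basis, which is exactly why partial $q$-transversality quantifies over all bases. Moreover, Lemmas~\ref{lem:indep_to_indep} and~\ref{lem:cycles_to_cycles} concern coordinate presentations only. Verify Hall's condition for an arbitrary basis $\beta$ instead. If $\gamma\subseteq\beta$ is not contained in $E_1\oplus0$, then the set of slots avoided by some member of $\gamma$ contains all $k$ slots $A_i\oplus0$. By Lemma~\ref{lem:lin_basis_basis} it also contains at least $\rho_2(Q)$ slots $E_1\oplus B_j$, where $Q=\langle\pi_2(\gamma\setminus(E_1\oplus0))\rangle$. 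Meanwhile $|\gamma|\le\dim X_1+\dim Q$. Since $\dim Q-\rho_2(Q)\le\dim P-\rho_2(P)\le\rho_1(E_1)-\dim X_1\le k-\dim X_1$ (as always $\rho_1(E_1)\le k$), Hall's condition holds. The case $\gamma\subseteq E_1\oplus0$ follows from $X_1\in\mathcal{I}(M_1)$. This inclusion needs no hypothesis on $k$.

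The missing idea is in Step 1: you need a linear basis $\delta$ of $P$ in which at most $\rho_2(P)$ vectors can be matched simultaneously into $\mathcal{X}_2$. Your three routes to this do not work:
\begin{itemize}
\item Adjusting the $E_1$-components of the lifts is irrelevant, since a vector outside $E_1\oplus0$ lies in $E_1\oplus B_j$ exactly when its projection lies in $B_j$.
\item An arbitrary extension of a basis of $I$ fails. For $\mathcal{X}_2=(\langle e_1\rangle,\langle e_1\rangle)$ on $\mathbb{F}_2^2$ the rank is $1$, yet $\{e_2,e_1+e_2\}$ is fully matchable.
\item The argument ``otherwise the projection would have an avoidance transversal'' gives no contradiction, because a dependent space can have matchable bases.
\end{itemize}

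What is needed is the min--max formula $\rho_2(P)=\min_{J\subseteq[\ell]}\bigl(\dim P-\dim(P\cap B_J)+\ell-|J|\bigr)$, with $B_J=\bigcap_{j\in J}B_j$. By Hall, $Y\in\mathcal{I}(M_2)$ if and only if $\dim(Y\cap B_J)\le\ell-|J|$ for all $J$, which gives the inequality $\le$.

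For $\ge$, fix a basis $Y$ of $P$ in $M_2$. The function $J\mapsto\dim(Y\cap B_J)+|J|$ is supermodular and bounded by $\ell$, so the sets where it equals $\ell$ are closed under intersection. Intersect $[\ell]$ with the sets witnessing dependence of $Y+\langle v\rangle$ for $v\in P\setminus Y$. This gives such a $J$ with $P\le Y+B_J$, at which the expression equals $\dim Y$.

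Now take $\delta$ through a basis of $P\cap B_J$ for a minimising $J$. The vectors of $\delta\cap B_J$ can only use the $\ell-|J|$ slots outside $J$, so at most $\rho_2(P)$ lifted vectors occupy slots $E_1\oplus B_j$. The remaining $\dim P-\rho_2(P)$ or more lifted vectors, together with a basis of $X\cap(E_1\oplus0)$, need distinct slots $A_i\oplus0$. This gives $k-\dim X_1\ge\dim P-\rho_2(P)$. The paper's ``if and only if'' count passes over this point too.
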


\begin{proof}
Let $M$ be the transversal $q$-matroid with presentation $\mathcal{X}_1\sq\mathcal{X}_2$. We show that $\mathcal{I}(M)=\mathcal{I}(M_1\sq M_2)$. Let $X\le E_1\oplus E_2$.

It is straightforward to verify that $M|(E_1\oplus0)\equiv M_1$ under the isomorphism $\pi_1|_{(E_1\oplus0)}$. Therefore $\pi_1(X\cap(E_1\oplus0))\in\mathcal{I}(M_1)$ is a necessary condition for $X\in\mathcal{I}(M)$, which coincides with the corresponding necessary condition for $X\in\mathcal{I}(M_1\sq M_2)$.

Given this observation, we show that $X$ is a transversal of $\mathcal{X}_1\sq\mathcal{X}_2$ if and only if
\[
k-\dim(X\cap(E_1\oplus0))
\ge
\dim(\pi_2(X))-\rho_2(\pi_2(X)).
\]

First observe that for any $v\in E_1\oplus E_2$ and $i\in[\ell]$ we have
\[
v\notin E_1\oplus B_i
\quad\text{if and only if}\quad
\pi_2(v)\notin B_i.
\]

Let $\beta$ be a linear basis of $X$. If $\beta$ admits an avoidance transversal of $\mathcal{X}_1\sq\mathcal{X}_2$, then any linear basis $\tilde{\beta}$ of $X$ satisfying
\[
\tilde{\beta}\cap(E_1\oplus0)\subseteq\beta\cap(E_1\oplus0)
\quad\text{and}\quad
\beta\setminus(E_1\oplus0)\subseteq\tilde{\beta}\setminus(E_1\oplus0)
\]
also admits such a transversal. Hence we may assume
\[
|\beta\cap(E_1\oplus0)|=\dim(X\cap(E_1\oplus0)),
\]
which implies
\[
|\beta\setminus(E_1\oplus0)|=|\pi_2(\beta)|=\dim(\pi_2(X)).
\]

Let $\beta_1=\beta\cap(E_1\oplus0)$. Since $\pi_1(X\cap(E_1\oplus0))\in\mathcal{I}(M_1)$, we may fix an avoidance transversal for $\beta_1$ using $|\beta_1|=\dim(X\cap(E_1\oplus0))$ members of $(A_1\oplus0,\dots,A_k\oplus0)$.
Since $\langle\pi_2(\beta)\rangle=\pi_2(X)$, Lemma~\ref{lem:lin_basis_basis} implies that there exists a linearly independent set $\beta'\subseteq\pi_2(\beta)$ such that $\langle\beta'\rangle$ is a basis of $\pi_2(X)$ in $M_2$. In particular $|\beta'|=\rho_2(\pi_2(X))$.
Choose $\beta_2\subseteq\beta\setminus(E_1\oplus0)$ such that $\pi_2(\beta_2)=\beta'$ and $|\beta_2|=|\beta'|$. The avoidance transversal of $\beta'$ in $\mathcal{X}_2$ induces an avoidance transversal of $\beta_2$ in $(E_1\oplus B_1,\dots,E_1\oplus B_\ell)$.

The remaining vectors of $\beta$ are
\[
\beta\setminus(\beta_1\cup\beta_2),
\]
whose number equals
\[
\dim(\pi_2(X))-\rho_2(\pi_2(X)).
\]
Thus there are enough remaining members of $(A_1\oplus0,\dots,A_k\oplus0)$ to complete an avoidance transversal of $\beta$ if and only if
\[
\dim(\pi_2(X))-\rho_2(\pi_2(X))
\le
k-\dim(X\cap(E_1\oplus0)).
\]
This shows that $X\in\mathcal{I}(M)$ if and only if $X\in\mathcal{I}(M_1\sq M_2)$, and hence $\mathcal{I}(M)=\mathcal{I}(M_1\sq M_2)$.
\end{proof}

By \cite[Theorem 30]{AlfaranoByrneFulcher2025}, it is clear that if $M_1$ and $M_2$ are coordinate $q$-matroids, then $M_1\sq M_2$ is coordinate. Therefore we immediately obtain the following corollary.

\begin{corollary}\label{cor:free_prod}
    Let $M_1$ and $M_2$ be coordinate transversal $q$-matroids. Then $M_1\sq M_2$ is a coordinate transversal $q$-matroid.
\end{corollary}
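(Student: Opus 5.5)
Two ingredients are needed: transversality of $M_1\sq M_2$, which Theorem~\ref{thm:free_product_transversal} gives outright, and coordinateness, which comes from the description of the cyclic flats of a free product in \cite[Theorem 30]{AlfaranoByrneFulcher2025}. The plan is to establish each ingredient separately, after first fixing a convention for what ``coordinate'' means on $E_1\oplus E_2$.

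For the convention, identify $E_1=\mathbb{F}_q^{n_1}$ and $E_2=\mathbb{F}_q^{n_2}$. Then $E_1\oplus E_2=\mathbb{F}_q^{n_1+n_2}$, and its standard basis is the union of the standard basis of $E_1\oplus0$ and that of $0\oplus E_2$. Since $M_1$ and $M_2$ are transversal, Theorem~\ref{thm:free_product_transversal} shows that $M_1\sq M_2$ is the transversal $q$-matroid with presentation $\mathcal{X}_1\sq\mathcal{X}_2$. This settles transversality.

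For coordinateness, I would invoke \cite[Theorem 30]{AlfaranoByrneFulcher2025} to describe each cyclic flat of $M_1\sq M_2$. I expect the description to say that every cyclic flat has one of three forms:
\begin{itemize}
\item $Z_1\oplus 0$ with $Z_1\in\mathcal{Z}(M_1)$ and $Z_1\neq E_1$;
\item $E_1\oplus Z_2$ with $Z_2\in\mathcal{Z}(M_2)$;
\item $Z_1\oplus 0$ or $E_1\oplus 0$ arising in the degenerate cases treated by that theorem (for example, when $M_2$ has a nonzero loop space).
\end{itemize}
In every case the cyclic flat has the form $Z_1\oplus Z_2$, where each $Z_i$ is either a cyclic flat of $M_i$ or one of $0$, $E_i$.

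Each of these possible pieces is coordinate in its factor. The cyclic flats of $M_i$ are coordinate because $M_i$ is a coordinate $q$-matroid, and $0$ and $E_i$ are coordinate trivially. Moreover, the direct sum of a coordinate subspace of $E_1\oplus0$ with a coordinate subspace of $0\oplus E_2$ is spanned by a subset of the standard basis of $\mathbb{F}_q^{n_1+n_2}$. It follows that every cyclic flat of $M_1\sq M_2$ is coordinate, so $M_1\sq M_2$ is a coordinate $q$-matroid, which completes the argument.

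The main obstacle is the second ingredient: extracting an exact description of $\mathcal{Z}(M_1\sq M_2)$ from \cite[Theorem 30]{AlfaranoByrneFulcher2025}. In particular, I must check that no cyclic flat of the free product mixes the two summands non-trivially, i.e.\ is not of the form $Z_1\oplus Z_2$. If that theorem is phrased differently, my fallback is to characterise the cyclic flats directly from Definition~\ref{def:free_product}. In that case I would verify by hand that every cyclic flat $Z$ either lies in $E_1\oplus 0$ or contains $E_1\oplus0$, and that it decomposes compatibly with $\pi_1$ and $\pi_2$.
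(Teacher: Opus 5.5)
Your proposal is correct and matches the paper's argument. Transversality comes directly from Theorem~\ref{thm:free_product_transversal}, and coordinateness comes from \cite[Theorem 30]{AlfaranoByrneFulcher2025}, since every cyclic flat of $M_1\sq M_2$ has the form $Z_1\oplus 0$ or $E_1\oplus Z_2$ with $Z_i\in\mathcal{Z}(M_i)$. One minor slip in your third bullet: when $M_2$ has a nonzero loop space, the extra cyclic flat is $E_1\oplus\mathrm{cl}_{M_2}(0)$, not $E_1\oplus 0$. That space is already of your second form, so the conclusion is unaffected.
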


We now turn to the case of the direct sum. This situation is more delicate than the free product, so we restrict ourselves here to the direct sum of coordinate transversal $q$-matroids. Nevertheless, the result suggests a natural presentation for the direct sum in the general case.

\begin{lemma}\label{lem:direct_sum_matroids}
Let $N_1=(S,r_1)$ and $N_2=(T,r_2)$ be avoidance transversal matroids with respective presentations $(S_1,\dots,S_k)$ and $(T_1,\dots,T_\ell)$, where $S\cap T=\varnothing$. Then $N_1\oplus N_2$ is an avoidance transversal matroid with presentation
\[
\mathcal{S}=(S_1\cup T,\dots,S_k\cup T,S\cup T_1,\dots,S\cup T_\ell).
\]
\end{lemma}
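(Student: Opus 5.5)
We show directly that the independent sets of $N_1\oplus N_2$ coincide with the sets having an avoidance transversal of $\mathcal{S}$. Recall that $X\subseteq S\cup T$ is independent in $N_1\oplus N_2$ exactly when $X\cap S\in\mathcal{I}(N_1)$ and $X\cap T\in\mathcal{I}(N_2)$. The key observation is the analogue of the one used for the free product in Theorem~\ref{thm:free_product_transversal}. For $x\in S$, we have $x\notin S\cup T_j$ for every $j$, while $x\notin S_i\cup T$ if and only if $x\notin S_i$. Symmetrically, for $y\in T$, we have $y\notin S_i\cup T$ for every $i$, while $y\notin S\cup T_j$ if and only if $y\notin T_j$. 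Thus elements of $S$ can only be matched to the first $k$ members of $\mathcal{S}$, and elements of $T$ only to the last $\ell$.

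For the forward inclusion, let $X$ be independent in $N_1\oplus N_2$. Fix an avoidance transversal of $X\cap S$ in $(S_1,\dots,S_k)$, given by an injection $x\mapsto A(x)$ with $x\notin A(x)$. Fix likewise an avoidance transversal of $X\cap T$ in $(T_1,\dots,T_\ell)$. Send each $x\in X\cap S$ to $A(x)\cup T$ and each $y\in X\cap T$ to the corresponding $S\cup T_j$. By the observation above, each element avoids its assigned set. The assignment is injective because the two blocks of $\mathcal{S}$ are indexed separately. Hence $X$ has an avoidance transversal of $\mathcal{S}$.

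For the reverse inclusion, suppose $X$ has an avoidance transversal of $\mathcal{S}$. By the observation, every $x\in X\cap S$ is matched to some $S_i\cup T$ with $x\notin S_i$, and every $y\in X\cap T$ is matched to some $S\cup T_j$ with $y\notin T_j$. Restricting the matching to $X\cap S$ gives an avoidance transversal of $(S_1,\dots,S_k)$. Restricting to $X\cap T$ gives one of $(T_1,\dots,T_\ell)$. So $X\cap S\in\mathcal{I}(N_1)$ and $X\cap T\in\mathcal{I}(N_2)$, and $X$ is independent in the direct sum.

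There is no real obstacle here. The only point needing care is the bookkeeping of the definition. An avoidance transversal in the paper's sense matches the $|X|$ elements injectively into the $k+\ell$ members of $\mathcal{S}$; the orderings in the definition encode exactly this. We must also use that $S\cap T=\varnothing$, so that $X$ splits uniquely as $(X\cap S)\cup(X\cap T)$.
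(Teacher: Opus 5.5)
Your proof is correct and follows the same approach as the paper. Both rest on the observation that elements of $S$ can only avoid members of the form $S_i\cup T$, and elements of $T$ only members of the form $S\cup T_j$, so an avoidance transversal of $\mathcal{S}$ splits into avoidance transversals of the two presentations and conversely. You spell out both inclusions and the use of $S\cap T=\varnothing$ more explicitly than the paper does.
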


\begin{proof}
Let $N$ denote the avoidance transversal matroid with presentation $\mathcal{S}$. We show that $N=N_1\oplus N_2$.

Observe that no element of $S$ avoids any member of $(S\cup T_1,\dots,S\cup T_\ell)$, and no element of $T$ avoids any member of $(S_1\cup T,\dots,S_k\cup T)$. Hence a set $I\subseteq S\cup T$ is an avoidance transversal of $\mathcal{S}$ if and only if $I\cap S$ is an avoidance transversal of $(S_1,\dots,S_k)$ and $I\cap T$ is an avoidance transversal of $(T_1,\dots,T_\ell)$.

Therefore $I$ is independent in $N$ if and only if $I\cap S$ is independent in $N_1$ and $I\cap T$ is independent in $N_2$. This is exactly the definition of independence in $N_1\oplus N_2$, and hence $N=N_1\oplus N_2$.
\end{proof}

\begin{theorem}\label{thm:coord_q-mat_direct_sum}
If $M_1$ and $M_2$ are coordinate transversal $q$-matroids, then $M_1\oplus M_2$ is a coordinate transversal $q$-matroid with presentation $\mathcal{X}_1\oplus\mathcal{X}_2$.
\end{theorem}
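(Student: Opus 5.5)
We will reduce everything to the matroid setting and then invoke Theorem~\ref{thm:cyc_flat_embedding}. Up to coordinates, take $E_1=\mathbb{F}_q^{n_1}$ and $E_2=\mathbb{F}_q^{n_2}$, so that $E_1\oplus E_2=\mathbb{F}_q^{n_1+n_2}$. Let $N_1$ and $N_2$ be the corresponding matroids of $M_1$ and $M_2$, on the ground sets $S=[n_1]$ and $T=\{n_1+1,\dots,n_1+n_2\}$.

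\textbf{Step 1: the direct sum is coordinate with corresponding matroid $N_1\oplus N_2$.} By Theorem~\ref{thm:direct_sum_cyclic_flats}, every cyclic flat of $M_1\oplus M_2$ has the form $Z_1\oplus Z_2$ with both $Z_i$ coordinate, so it is coordinate. Thus $M_1\oplus M_2$ is a coordinate $q$-matroid.

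We must also check ranks. Take $Y=X$ in Definition~\ref{def:direct_sum}. Together with $\rho(Z_1\oplus Z_2)\geq\rho_1(Z_1)+\rho_2(Z_2)$, which follows from the standard fact that restrictions of the direct sum to $E_1\oplus0$ and $0\oplus E_2$ recover $M_1$ and $M_2$ plus submodularity, this gives $\rho(Z_1\oplus Z_2)=\rho_1(Z_1)+\rho_2(Z_2)$. Under $\phi$ this matches the weighted cyclic flats of $N_1\oplus N_2$, namely $\{Z_1\cup Z_2\}$ with rank $r_1+r_2$. Hence the corresponding matroid of $M_1\oplus M_2$ is $N_1\oplus N_2$.

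\textbf{Step 2: transversality of $N_1$ and $N_2$.} We need $N_i$ to be transversal. The hypothesis says only that $M_i$ is transversal with some presentation $\mathcal{X}_i$, not that $N_i$ is transversal. This is the main obstacle. The statement's presentation $\mathcal{X}_1\oplus\mathcal{X}_2$ suggests reading ``coordinate transversal'' as transversal with a coordinate presentation $\mathcal{X}_i=\phi(\mathcal{S}_i)$. I would adopt this reading, possibly noting it explicitly. Then $M_i=\phi(N_i')$, where $N_i'$ is the avoidance transversal matroid with presentation $\mathcal{S}_i$. The results of Section~3 (Corollary~\ref{cor:CFs_in_image_are_image_of_CFs}, Lemmas~\ref{lem:CFs_of_q-mat_in_image} and~\ref{lem:ranks_agree}) show that the corresponding matroid of $M_i$ is $N_i'$, so $N_i=N_i'$ is avoidance transversal with presentation $\mathcal{S}_i$. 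If instead only an arbitrary presentation is given, I would try to project it coordinatewise and show that the independent spaces are unchanged. I expect that to be hard, and I would fall back on the coordinate-presentation reading.

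\textbf{Step 3: conclusion.} Lemma~\ref{lem:direct_sum_matroids} shows that $N_1\oplus N_2$ is avoidance transversal with presentation $(S_i\cup T,\dots,S\cup T_j,\dots)$. Its image under $\phi$ is exactly $\mathcal{X}_1\oplus\mathcal{X}_2$, since $\phi(S_i\cup T)=A_i\oplus E_2$ and $\phi(S\cup T_j)=E_1\oplus B_j$.

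The proof of Theorem~\ref{thm:cyc_flat_embedding} then shows that $M_1\oplus M_2$ equals the transversal $q$-matroid $\phi(N_1\oplus N_2)$ with this presentation. That $q$-matroid is coordinate and, by Theorem~\ref{thm:saaltink_representable}, representable. This completes the argument.
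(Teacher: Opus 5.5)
Your route is the paper's: pass to the corresponding matroids, apply Lemma~\ref{lem:direct_sum_matroids}, and transfer back through Theorem~\ref{thm:cyc_flat_embedding}. The gap is the one you flag yourself in Step~2. You settle it by strengthening the hypothesis instead of proving anything.

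In the paper, a coordinate transversal $q$-matroid is one that is transversal and whose cyclic flats are all coordinate. Your argument needs the corresponding matroid $N_i$ to be transversal, which means $M_i$ must admit a presentation by coordinate subspaces. As written, you therefore only prove the theorem for $q$-matroids that already come with such a presentation.

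The missing idea is not a coordinatewise projection of an arbitrary presentation. It is a structural fact about presentations: the paper invokes \cite[Theorems 21 and 23]{saaltink2025theoryqtransversals} to conclude that the members $A_i$, $B_j$ of the presentations are cyclic flats of $M_1$, $M_2$. Since every cyclic flat of $M_i$ is coordinate, this gives $\mathcal{X}_1=\phi(\mathcal{S}_1)$ with $\mathcal{S}_1=(\sigma(A_1),\dots,\sigma(A_k))$, and likewise for $M_2$. From there your Step~2 argument via Corollary~\ref{cor:CFs_in_image_are_image_of_CFs} and Lemmas~\ref{lem:CFs_of_q-mat_in_image} and~\ref{lem:ranks_agree} goes through verbatim.

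This fact can only hold for a suitably chosen presentation. For example, $(\langle e_1+e_2\rangle,0)$ presents the free $q$-matroid on $\mathbb{F}_q^2$, whose only cyclic flat is $0$. So the $\mathcal{X}_i$ in the statement must be read as presentations by cyclic flats, or at least by coordinate subspaces. Your instinct that the choice of presentation matters is right; what is missing is the existence of a good one.

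A smaller point in Step~1: the inequality $\rho(Z_1\oplus Z_2)\ge\rho_1(Z_1)+\rho_2(Z_2)$ does not follow from submodularity, which only bounds the rank of a sum from above. It follows directly from Definition~\ref{def:direct_sum}. For $Y\le Z_1\oplus Z_2$ we have $\pi_j(Y)\le Z_j$ and $\dim Y\le\dim\pi_1(Y)+\dim\pi_2(Y)$. We also have $\rho_j(Z_j)\le\rho_j(\pi_j(Y))+\dim Z_j-\dim\pi_j(Y)$. Hence every term in the minimum is at least $\rho_1(Z_1)+\rho_2(Z_2)$. With this fixed, your rank check is more careful than the paper's, which cites only Theorem~\ref{thm:direct_sum_cyclic_flats} when identifying $M_1\oplus M_2$ with the $q$-matroid of $N_1\oplus N_2$.
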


\begin{proof}
Since $M_1$ and $M_2$ are coordinate, we may fix linear bases
$\beta_1=\{a_1,\dots,a_m\}$ and $\beta_2=\{b_1,\dots,b_n\}$ of $E_1$ and $E_2$
such that every member of $\mathcal{Z}(M_1)$ and $\mathcal{Z}(M_2)$ is the span of a subset of $\beta_1$ and $\beta_2$. Let $\phi^{-1}(M_1)$ and $\phi^{-1}(M_2)$ denote the corresponding matroids.

By \cite[Theorems 21 and 23]{saaltink2025theoryqtransversals}, we have $A_i\in\mathcal{Z}(M_1)$ and $B_j\in\mathcal{Z}(M_2)$ for each $i\in[k]$ and $j\in[\ell]$, so every member of the presentations $\mathcal{X}_1$ and $\mathcal{X}_2$ is coordinate. Consequently, by Theorem~\ref{thm:cyc_flat_embedding}, the families
\[
\mathcal{S}_1=(\sigma(A_1),\dots,\sigma(A_k))
\quad\text{and}\quad
\mathcal{S}_2=(\sigma(B_1),\dots,\sigma(B_\ell))
\]
are presentations of $\phi^{-1}(M_1)$ and $\phi^{-1}(M_2)$.

We regard $\phi^{-1}(M_1)$ and $\phi^{-1}(M_2)$ as matroids on disjoint ground sets $S$ and $T$. By Lemma~\ref{lem:direct_sum_matroids}, the direct sum $\phi^{-1}(M_1)\oplus\phi^{-1}(M_2)$ is a transversal matroid with presentation
\[
(\sigma(A_1)\cup T,\dots,\sigma(A_k)\cup T,S\cup\sigma(B_1),\dots,S\cup\sigma(B_\ell)).
\]

By \cite[Theorem 6.2]{GluesingLuerssenJany2024}, the $q$-matroid associated to $\phi^{-1}(M_1)\oplus\phi^{-1}(M_2)$ is precisely $M_1\oplus M_2$. Applying Theorem~\ref{thm:cyc_flat_embedding}, it follows that $M_1\oplus M_2$ has a presentation obtained by applying $\phi$ to the members of the above family. This yields
\[
(A_1\oplus E_2,\dots,A_k\oplus E_2,E_1\oplus B_1,\dots,E_1\oplus B_\ell),
\]
which is exactly $\mathcal{X}_1\oplus\mathcal{X}_2$.
\end{proof}

Theorem~\ref{thm:coord_q-mat_direct_sum} suggests that if the class of transversal $q$-matroids is closed under direct sums, then the presentation of any such direct sum can be formed analogously to the matroid case. We therefore record the following conjecture.

\begin{conjecture}
Let $M_1$ and $M_2$ be transversal $q$-matroids with presentations $\mathcal{X}_1$ and $\mathcal{X}_2$. Then $M_1\oplus M_2$ is a transversal $q$-matroid with presentation $\mathcal{X}_1\oplus\mathcal{X}_2$.
\end{conjecture}

The following consequence illustrates how the preceding results recover and extend known representability results for uniform $q$-matroids. Its proof follows immediately from Corollary~\ref{cor:free_prod} and Theorem~\ref{thm:coord_q-mat_direct_sum}.

\begin{corollary}
Let $M_1,\dots,M_t$ be uniform $q$-matroids. Then any $q$-matroid obtained from $M_1,\dots,M_t$ by iteratively applying direct sums and free products is a coordinate transversal $q$-matroid and therefore representable.
\end{corollary}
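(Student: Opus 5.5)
The plan is to argue by structural induction on the number of binary operations, using Corollary~\ref{cor:free_prod} and Theorem~\ref{thm:coord_q-mat_direct_sum} as the two inductive steps. Representability then follows from Theorem~\ref{thm:saaltink_representable} or, equivalently, from Theorem~\ref{thm:cyc_flat_embedding}. The claim to carry through the induction is: every $q$-matroid built from $M_1,\dots,M_t$ by iterated direct sums and free products is, up to equivalence, a coordinate transversal $q$-matroid.

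The base case is a single uniform $q$-matroid $U_{r,n}$ on $\mathbb{F}_q^n$. Its cyclic flats are contained in $\{0,\mathbb{F}_q^n\}$, and both spaces are coordinate, so $U_{r,n}$ is coordinate. Its corresponding matroid has weighted cyclic flats $\{\varnothing,[n]\}$ with the ranks inherited from $U_{r,n}$. By the cyclic flat axioms this is the uniform matroid $U_{r,n}$ on $[n]$ (or a free or trivial matroid in the degenerate cases $r\in\{0,n\}$). The uniform matroid is transversal, for instance with avoidance presentation consisting of $r$ copies of $\varnothing$. Theorem~\ref{thm:cyc_flat_embedding} then shows that the uniform $q$-matroid is transversal, and indeed it has presentation $(0,\dots,0)$ with $r$ copies. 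One can also verify this directly: every linear basis of a space of dimension at most $r$ avoids $0$ in any order, and no space of dimension exceeding $r$ is independent. In either case the base $q$-matroid is coordinate transversal.

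For the inductive step, suppose $N_1$ and $N_2$ are coordinate transversal $q$-matroids obtained by fewer operations. Corollary~\ref{cor:free_prod} gives that $N_1\sq N_2$ is coordinate transversal, and Theorem~\ref{thm:coord_q-mat_direct_sum} gives the same for $N_1\oplus N_2$. Finally, a coordinate transversal $q$-matroid has transversal corresponding matroid, since by the proof of Theorem~\ref{thm:coord_q-mat_direct_sum} its presentation members are coordinate cyclic flats. Its presentation members are therefore spans of subsets of a fixed basis, so Theorem~\ref{thm:saaltink_representable} applies and yields representability.

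The main subtlety is bookkeeping of coordinates. After each operation we must ensure that the coordinate bases of the two factors combine into a single coordinate basis of $E_1\oplus E_2$, namely the concatenation of the two bases. This holds because Theorem~\ref{thm:direct_sum_cyclic_flats} and the cited free product result express the cyclic flats of the result as sums of a cyclic flat of $M_1$ with either a cyclic flat of $M_2$ or with $E_1$ or $E_2$. I would state this explicitly so that ``coordinate up to equivalence'' is preserved at each step.
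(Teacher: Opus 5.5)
Your proposal is correct and follows the paper's route. The paper only says the corollary follows from Corollary~\ref{cor:free_prod} and Theorem~\ref{thm:coord_q-mat_direct_sum}; your structural induction, with the base case that a rank-$r$ uniform $q$-matroid is coordinate with presentation consisting of $r$ copies of $0$, makes that explicit.

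One small tightening for the representability step: carry the explicit presentations through the induction ($r$ copies of $0$ at the base, then $\mathcal{X}_1\sq\mathcal{X}_2$ or $\mathcal{X}_1\oplus\mathcal{X}_2$). These stay coordinate in the concatenated basis, so Theorem~\ref{thm:saaltink_representable} applies directly. This avoids asserting that presentation members are coordinate cyclic flats, which is false for arbitrary presentations: for example, $(\langle e_3\rangle,\langle e_1\rangle)$ presents the rank-$2$ uniform $q$-matroid on $\mathbb{F}_q^3$, whose only cyclic flats are $0$ and $\mathbb{F}_q^3$.
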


In particular, this recovers the representability of arbitrary direct sums of uniform $q$-matroids established in \cite{alfarano2026representability}, which may suggest a useful connection between $q$-transversals and finite geometry.

\bibliography{references}
\bibliographystyle{abbrv}
\end{document}